\documentclass[a4paper,11pt]{amsart}

       \usepackage{palatino, verbatim}
        \usepackage[latin1]{inputenc}
        \usepackage[T1]{fontenc}
        \usepackage{amsthm, amsfonts}
        \usepackage{amsfonts}
        \usepackage{graphicx}
        \usepackage{amssymb}
        \usepackage{amsmath}
        \usepackage{latexsym}
        \usepackage{color}
      
        \usepackage[all]{xy}

        \newtheorem{thm}{Theorem}[section]
          \newtheorem{cor}[thm]{Corollary}
          \newtheorem{lem}[thm]{Lemma}
          \newtheorem{prop}[thm]{Proposition}

        \theoremstyle{definition}
          
          \newtheorem{rem}{Remark}
          \newtheorem{ex}{Example}

          \newcommand\M{{\mathcal M}}

          \newcommand\e{{\mathfrak e}}
           \newcommand\f{{\mathfrak f}}
          
          \newcommand\ZZ{{\mathbb Z}}

          \newcommand\Pic{\mathrm{Pic}}

\topmargin = 0 pt 
\textheight = 645 pt 

\oddsidemargin = 10 pt 
\evensidemargin = 10 pt 
\textwidth      = 412 pt  
\begin{document}
\title{Genus two curves on Abelian surfaces}
\author[A.~L.~Knutsen]{Andreas Leopold Knutsen}
\address{Andreas Leopold Knutsen, Department of Mathematics, University of Bergen,
Postboks 7800,
5020 Bergen, Norway}
\email{andreas.knutsen@math.uib.no}

\author[M.~Lelli-Chiesa]{Margherita Lelli-Chiesa}
\address{Università degli studi Roma Tre, Dipartimento di Matematica e Fisica, Largo San Leonardo Murialdo 1, 00146 Roma}
\email{margherita.lellichiesa@uniroma3.it}

\begin{abstract}
This paper deals with singularities of genus $2$ curves on a general $(d_1,d_2)$-polarized abelian surface $(S,L)$. In analogy with Chen's results concerning rational curves on K3 surfaces \cite{ch1,ch2}, it is natural to ask whether all such curves are nodal. We prove that this holds true if and only if $d_2$ is not divisible by $4$. In the cases where $d_2$ is a multiple of $4$, we exhibit genus $2$ curves in $|L|$ that have a triple, $4$-tuple or $6$-tuple point. We show that these are the only possible types of unnodal singularities of a genus $2$ curve in $|L|$. Furthermore, with no assumption on $d_1$ and $d_2$, we prove the existence of at least one nodal genus $2$ curve in $|L|$. As a corollary, we obtain nonemptiness of all Severi varieties on general abelian surfaces and hence generalize \cite[Thm. 1.1]{KLM} to nonprimitive polarizations.
\end{abstract}

\maketitle

\section{Introduction}
 The minimal geometric genus of any curve lying on a general abelian surface is $2$ and there are finitely many curves of such genus in a fixed linear system. The role of genus two curves on abelian surfaces is thus analogous to that of rational curves on $K3$ surfaces, but until now it has not been investigated as extensively. Their enumeration is now well understood. Their count in the primitive case was carried out by G\"ottsche \cite{Go}, Debarre \cite{De} and Lange-Sernesi \cite{LS1}, and used in \cite{De} in order to compute the Euler characteristic of generalized Kummer varieties. Only recently, Bryan, Oberdieck, Pandharipande and Yin \cite{BOPY} handled the nonprimitive case, thus obtaining a formula parallel to the full Yau-Zaslow conjecture for rational curves on K3 surfaces (cf. \cite{KMPS}). 
 
Singularities of rational curves on $K3$ surfaces have received plenty of attention. Mumford \cite[Appendix]{MM} first proved the existence of a nodal rational curve in the primitive linear system $|L|$ on a general genus $g$ polarized $K3$ surface $(S,L)$; as a byproduct, he obtained nonemptiness of the Severi variety $|L|_\delta$ parametrizing $\delta$-nodal curves in $|L|$ for any integer $0\leq \delta\leq g$. His results were then generalized by Chen \cite{ch1,ch2} to nonprimitive linear systems. In the primitive case, Chen managed to deal with all rational curves in $|L|$ showing that they are all nodal; the analogue for nonprimitive linear systems is still an open problem. 

Singularities of genus $2$ curves on abelian surfaces are not as well understood, even though they are necessarily ordinary (cf. \cite[Prop. 2.2]{LS2}). The natural question whether any genus $2$ curve on a general $(d_1,d_2)$-polarized abelian surface is nodal \cite[Pb. 2.7]{LC} has negative answer if one does not make any assumption on $d_1$ and $d_2$. Indeed, multiplication by $2$ on a principally polarized abelian surface $(A,L)$ identifies the six Weierstrass points of its theta divisor, whose image is thus a genus $2$ curve with  a $6$-tuple point lying in (a translate) of the linear system $|L^{\otimes 4}|$ (cf. Example \ref{sestuplo}). Since this is a polarization of type $(4,4)$, all genus $2$ curves may still be expected to be nodal  in primitive linear systems (or even in linear systems not divisible by $4$, cf. \cite[Conj. 2.10]{LC}). Our main result is that this expectation does not hold in its full generality and detects all the cases where it fails, thus completely answering the question.

\begin{thm}\label{genus two}
Let $(S,L)$ be a general abelian surface with a polarization of type $(d_1,d_2)$. Then any genus $2$ curve in the linear system $|L|$ is nodal if and only if $4$ does not divide $d_2$.
\end{thm}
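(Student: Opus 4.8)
\section*{Proof proposal for Theorem~\ref{genus two}}

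The plan is to treat the two implications separately, in both cases passing from a genus $2$ curve $C\in|L|$ to its normalization $\nu\colon\Gamma\to C\subset S$. Since $\Gamma$ has genus $2$ and $S$ is abelian (and simple, being general), the Albanese property factors $\nu$ as $\Gamma\xrightarrow{\alpha}J(\Gamma)\xrightarrow{\pi}S$, with $\alpha$ an Abel--Jacobi embedding and $\pi$ an isogeny, say with kernel $K$. A numerical comparison of $\pi^{*}L\equiv|K|\,\Theta$ (from $\pi^{*}\pi_{*}[\Gamma]=\sum_{t\in K}[\Gamma+t]$ and $\pi_{*}[\Gamma]=c_{1}(L)$) with $(\pi^{*}L)^{2}=|K|\,L^{2}$ forces $|K|=d_{1}d_{2}$, while descent of $L$ forces $K$ to be isotropic for the Weil pairing $e^{\pi^{*}L}$ on $J(\Gamma)[d_{1}d_{2}]$ with $K^{\perp}/K$ of type $(d_{1},d_{2})$. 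By \cite[Prop.~2.2]{LS2} the singularities of $C$ are ordinary, so $C$ is non-nodal exactly when $\pi$ identifies $m\ge 3$ points of $\Gamma$; and since the intersections of $\Gamma$ with its translates that persist for \emph{all} genus $2$ curves are precisely those forced by the six Weierstrass points, on a general $\Gamma$ such a configuration consists of $m$ Weierstrass points. In the $\mathbb{F}_{2}$-symplectic space $J(\Gamma)[2]$ (where the Weil pairing of two Weierstrass-point differences equals $-1$ when the index sets overlap) the subgroup $W_{m}$ spanned by $m$ of the $w_{i}$ has order $4$, $8$, or $16$ according as $m=3$, $m=4$, or $m\ge 5$, and five Weierstrass points already span all of $J(\Gamma)[2]$; hence $m\in\{3,4,6\}$, recovering the list in the abstract.

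For ``$4\nmid d_{2}\Rightarrow$ all curves nodal'' I would argue by a dimension count. The locus $\mathcal{N}$ of data $(\Gamma,K,p_{1},p_{2},p_{3})$ producing a non-nodal curve maps quasi-finitely to $\mathcal{M}_{2}$ (for fixed $\Gamma$ there are finitely many isotropic $K$, and given $K$ only finitely many triples of identified points), so every $3$-dimensional component of $\mathcal{N}$ dominates $\mathcal{M}_{2}$ and thus contains configurations over a general $\Gamma$, which by the previous paragraph are Weierstrass configurations with $W_{m}\subseteq K$. It remains to prove the group-theoretic statement that an isotropic $K\subseteq J(\Gamma)[d_{1}d_{2}]$ of order $d_{1}d_{2}$ with $W_{m}\subseteq K$ and $K^{\perp}/K$ of type $(d_{1},d_{2})$ exists \emph{only if} $4\mid d_{2}$: writing $d=d_{1}d_{2}$, the restriction of $e^{d\Theta}$ to $J(\Gamma)[2]$ is $e_{2}^{d/2}$, which is nondegenerate when $4\nmid d$ so that the non-isotropic $W_{m}$ cannot lie in $K$; and when $4\mid d$ but $4\nmid d_{2}$ (so $v_{2}(d_{1})=v_{2}(d_{2})=1$) the $2$-primary part of $K$ is forced to equal $W_{3}$, whereas a direct computation gives $W_{3}^{\perp}/W_{3}\cong(\Z/4)^{2}$ $2$-adically, incompatible with the $2$-part $(\Z/2)^{4}$ of $K(L)$. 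Hence if $4\nmid d_{2}$ the locus $\mathcal{N}$ has no $3$-dimensional component, so it does not dominate $\mathcal{A}_{(d_{1},d_{2})}$ and a general $(S,L)$ carries only nodal genus $2$ curves.

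For the converse ``$4\mid d_{2}\Rightarrow$ some curve is non-nodal'' I would exhibit the configurations. For a general $\Gamma$ and $K=W_{3}=\langle w_{1}-w_{2},\,w_{1}-w_{3}\rangle\subset J(\Gamma)[4]$ one obtains a curve with a triple point at the common image of $w_{1},w_{2},w_{3}$, and the computation $W_{3}^{\perp}/W_{3}\cong(\Z/4)^{2}$ shows the corresponding $|L|$ has type $(1,4)$; Example~\ref{sestuplo} is the parallel construction with $K=J(\Gamma)[2]=W_{6}$, giving a $6$-tuple point on a $(4,4)$-surface. For a general pair $(d_{1},d_{2})$ with $4\mid d_{2}$ one enlarges $W_{3}$ (resp.\ $W_{4}$, resp.\ $W_{6}$) to an isotropic $K$ of order $d_{1}d_{2}$ by adjoining a suitable isotropic subgroup of $W_{m}^{\perp}/W_{m}$ chosen so that $K^{\perp}/K$ has type $(d_{1},d_{2})$, the hypothesis $4\mid d_{2}$ being exactly what allows the $2$-adic type to fit. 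As $\Gamma$ ranges over an open subset of $\mathcal{M}_{2}$ while $K$ is locally constant, the resulting $3$-dimensional family of pairs $(\Gamma,K)$ maps quasi-finitely, hence dominantly, onto $\mathcal{A}_{(d_{1},d_{2})}$; thus a general $(S,L)$ of type $(d_{1},d_{2})$ carries a genus $2$ curve in $|L|$ with a triple, $4$-tuple, or $6$-tuple point.

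The main obstacle is the finite-group bookkeeping common to both directions: determining, for each $(d_{1},d_{2})$, exactly which isotropic $K\supseteq W_{m}$ of order $d_{1}d_{2}$ admit $K^{\perp}/K$ of type $(d_{1},d_{2})$, and verifying that this happens precisely when $4\mid d_{2}$. A secondary technical point is making rigorous the assertion that on a general $\Gamma$ the only configurations of three or more points with pairwise torsion differences are the Weierstrass ones; this rests on the finiteness of torsion points on a curve of genus $\ge 2$ in its Jacobian together with the identification of which $2$-torsion classes lie on the Abel--Jacobi image.
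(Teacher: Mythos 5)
Your reduction to the Jacobian, the translation of non-nodality into an isotropic kernel $K$ containing the span $W_m$ of differences of $m$ identified points, and the finite-group bookkeeping are all essentially the paper's Section~\ref{prima}: the computation that two Weierstrass-point differences pair to $i^{d_1d_2}$ (forcing $4\mid d_1d_2$), the subquotient obstruction $(\mathbb{Z}/2)^{4}$ versus $(\mathbb{Z}/4)^{2}$ ruling out $d_1\equiv d_2\equiv 2 \pmod 4$, the classification $m\in\{3,4,6\}$, and the explicit enlargements of $W_m$ when $4\mid d_2$ all appear there (Lemma~\ref{group}, Examples~\ref{quadruplo} and~\ref{sestuplo}, Remark~\ref{singu}), and your versions are correct in outline, though the existence of the enlarged isotropic $K$ with $K^\perp/K$ of type $(d_1,d_2)$ is asserted rather than exhibited as in Example~\ref{quadruplo}.

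The genuine gap is the sentence you demote to a ``secondary technical point'': that on a general genus $2$ curve $\Gamma$ the only triples of points with pairwise $d_2$-torsion differences are triples of Weierstrass points. This is not a bookkeeping lemma; it is the technical heart of the paper (Theorem~\ref{thm:plane}), and its proof occupies all of Section~\ref{seconda} via degeneration to a union of two elliptic curves, limit linear series, and an infinitesimal study of a generalized Severi variety (Proposition~\ref{elliptic}). Your proposed justification --- finiteness of torsion points on a curve of genus $\geq 2$ in its Jacobian --- does not apply: the condition is that \emph{differences} of points are torsion, not that the points are torsion (the Abel--Jacobi embedding is anyway only defined up to translation), and for every fixed torsion class $t$ the intersection $\Gamma\cap(\Gamma+t)$ is nonempty (it has length $\Theta^2=2$), so there are infinitely many pairs with torsion difference on every curve. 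The actual question is whether three translates $\Gamma$, $\Gamma+t_1$, $\Gamma+t_2$ can share a common point for general $\Gamma$ and non-Weierstrass $t_i$; a naive dimension count over $\mathcal{M}_2$ cannot settle this, precisely because the Weierstrass configurations show that some components of the incidence locus do dominate $\mathcal{M}_2$, and ruling out the others is a nontrivial Brill--Noether statement with ramification (a $g^2_{d_2}$ totally ramified at three points forces $d_2$ even and the points Weierstrass). Likewise, your claim that a non-nodal configuration on a general $\Gamma$ ``consists of $m$ Weierstrass points'' presupposes exactly this. Without supplying a proof of this step, the ``$4\nmid d_2\Rightarrow$ nodal'' implication, and the identification of the singular points needed for the classification $m\in\{3,4,6\}$, remain unestablished.
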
 
When $d_2$ is a multiple of $4$, we exhibit genus $2$ curves in $|L|$ that have an unnodal singularity and, more precisely, a triple, a $4$-tuple or a $6$-tuple point (cf. Examples \ref{quadruplo} and \ref{sestuplo}). We also show that these are the only types of unnodal singularities that a genus $2$ curve on a general abelian surface may acquire (cf. Remark \ref{singu}). To our knowledge, the best bound on the order of such a singularity in the literature was $\frac{1}{2}\left( 1+\sqrt{8d_1d_2-7}\right)$ by Lange-Sernesi, cf. \cite[Prop. 2.2]{LS2}. The existence of unnodal genus $2$ curves in all primitive linear systems of type $(1,4k)$ is quite striking and highlights a major difference with the $K3$ case.

When $4$ divides $d_2$, the above theorem does not exclude the existence in $|L|$ of some nodal genus $2$ curves. This is indeed proved  in the following:

\begin{thm}\label{nodal}
Let $(S,L)$ be a general $(d_1,d_2)$-polarized abelian surface. Then the linear system $|L|$ contains a nodal curve of genus $2$.
\end{thm}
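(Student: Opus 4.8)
The plan is to produce a nodal genus $2$ curve in $|L|$ by a degeneration argument, constructing a suitable reducible or singular limit and then smoothing it while controlling the singularities. Concretely, I would first treat the ``easy'' direction using Theorem~\ref{genus two}: whenever $4\nmid d_2$, that theorem already guarantees that \emph{every} genus $2$ curve in $|L|$ is nodal, and there exists at least one genus $2$ curve in $|L|$ (the linear system $|L|$ has dimension $d_1d_2-1\geq 0$ and a general member has geometric genus $d_1d_2+1$, so by the standard existence results for genus $2$ curves on abelian surfaces --- going back to the count of G\"ottsche, Debarre and Lange--Sernesi cited in the introduction --- the locus of genus $2$ curves is nonempty). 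Hence in this case Theorem~\ref{nodal} is immediate, and the whole content lies in the case $4\mid d_2$.

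For the case $4 \mid d_2$, the idea is to degenerate the pair $(S,L)$ to a non-general abelian surface, or to a product of elliptic curves, on which one can exhibit by hand a nodal genus $2$ curve in the corresponding linear system, and then deform back. A natural model: take $S = E_1 \times E_2$ a product of elliptic curves with $L = p_1^*(\cO_{E_1}(d_1 O_1)) \otimes p_2^*(\cO_{E_2}(d_2 O_2))$ suitably adjusted to have the right type, or, better, degenerate to an abelian surface containing an elliptic curve $E$ with $E\cdot L$ small. On such a special surface, I would build a genus $2$ curve as (the normalization-theoretic limit of) a union of an elliptic curve and rational or elliptic tails glued at a controlled number of points so that arithmetic genus, geometric genus and the class in $|L|$ all match; then apply a smoothing/deformation argument à la Mumford--Chen, pulling the nodal curve back to the general $(S,L)$ while showing the $6$-tuple, $4$-tuple and triple point degenerations produced in Examples~\ref{quadruplo} and \ref{sestuplo} can be avoided --- i.e. that the component of the Hilbert scheme (or of the universal Severi-type family) containing a nodal genus $2$ curve dominates the moduli of polarized abelian surfaces.

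The key technical step, and the main obstacle, is showing that the nodal genus $2$ curve one constructs on the special surface actually deforms to a \emph{nodal} curve on the general $(S,L)$, rather than only to an equisingular family preserving a worse singularity, or to a curve of higher geometric genus. This requires a transversality / dimension count: one must check that the family of genus $2$ curves in $|L|$ has the expected dimension $2$ (matching $\dim \cM_{(d_1,d_2)}$, the moduli of $(d_1,d_2)$-polarized abelian surfaces, which is also $2$ after quotienting), and that within this family the nodal locus is nonempty and dense, equivalently that the locus forcing an unnodal singularity is of codimension $\geq 1$. The arithmetic constraints on ordinary singularities of genus $2$ curves --- a genus $2$ curve in $|L|$ on a general $S$ has $\delta = d_1 d_2 - 1$ and its singularities $(m_i)$ satisfy $\sum \binom{m_i}{2} = d_1 d_2 - 1$ --- together with Remark~\ref{singu}'s classification of possible multiplicities $(m_i \in \{2,3,4,6\})$ make this a finite check once the deformation space is understood.

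Finally, I would assemble these pieces: use that nonemptiness of the nodal locus is an open condition that propagates in families, combine it with an explicit nodal curve on a boundary/degenerate surface (obtained either from a product of elliptic curves or from a chain-of-elliptic-curves degeneration where genus $2$ nodal curves are classical), and invoke semicontinuity of the number of nodes plus the genus formula to conclude that the generic deformation in $|L|$ over the generic $(S,L)$ is itself an irreducible nodal genus $2$ curve. As a corollary one then reads off nonemptiness of all Severi varieties $|L|_\delta$ for $0 \leq \delta \leq d_1 d_2 - 1$ by partially smoothing the nodes, extending \cite[Thm.~1.1]{KLM} to the nonprimitive case, exactly as announced in the abstract.
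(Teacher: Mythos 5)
Your proposal does not constitute a proof; it is a strategy outline whose hardest steps are named but not carried out, and it misses the mechanism that makes the theorem accessible. The paper explicitly does \emph{not} degenerate $S$ to a product of elliptic curves or to any special surface (see the introduction: ``we need neither to degenerate $S$ to a singular surface nor to specialize it to an abelian surface with large Neron--Severi group''). Instead it uses the structure already set up in Section \ref{prima}: every genus $2$ curve in $|L|$ is $\lambda(\Theta)$ for a polarized isogeny $\lambda\colon J(C)\to S$ whose kernel $G$ is an isotropic subgroup of $J(C)[d_1d_2]$ as in (**), and, by Theorem \ref{thm:plane}, for $C$ general the image $\lambda(\Theta)$ fails to be nodal only if some $G$-orbit contains at least three of the six Weierstrass $2$-torsion points listed in \eqref{weie}. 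The entire proof of Theorem \ref{nodal} then reduces to exhibiting, for each $(d_1,d_2)$, one such $G$ whose orbits meet \eqref{weie} in at most two points; the explicit choice $G=\langle d_1\mathfrak e_1', d_2\mathfrak e_2'\rangle$ works, and since $C$ ranges over all of $\M_2$ the resulting surfaces $A/G$ dominate the moduli space. No smoothing, transversality, or semicontinuity argument is needed.

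The concrete gaps in your plan are the following. First, the nodal curve on the special surface is never actually constructed: ``a union of an elliptic curve and rational or elliptic tails glued at a controlled number of points'' is not a construction, and on a product $E_1\times E_2$ producing an irreducible genus $2$ curve in the right class with only nodes is itself nontrivial. Second, the step you yourself flag as the main obstacle --- that the nodal curve deforms to a nodal curve on the general $(S,L)$ rather than degenerating into one of the worse singularities of Examples \ref{quadruplo} and \ref{sestuplo} --- is exactly where a Mumford--Chen style argument must do real work, and you give no argument for it. Third, your dimension counts are wrong: the moduli space of $(d_1,d_2)$-polarized abelian surfaces has dimension $3$, not $2$, and for fixed general $(S,L)$ the genus $2$ curves in $|L|$ form a \emph{finite} set (as stated in the introduction), so ``the nodal locus is dense in a $2$-dimensional family of curves in $|L|$'' is not a meaningful statement; among finitely many curves, a codimension count cannot show that at least one is nodal. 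Your treatment of the case $4\nmid d_2$ via Theorem \ref{genus two} plus the enumerative nonvanishing is correct but is subsumed by the uniform argument above.
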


Given a nodal genus $2$ curve as above, standard deformation theory enables one to smooth any of its nodes independently remaining inside the linear system $|L|$. As a consequence, Theorem \ref{nodal} yields nonemptiness of all Severi varieties on general abelian surfaces:
\begin{cor}\label{severi}
Let $(S,L)$ be a general $(d_1,d_2)$-polarized abelian surface. Then, for any $0\leq \delta\leq d_1d_2-1$ the Severi variety $|L|_{\delta}$ is nonempty and smooth of  dimension equal to $d_1d_2-1-\delta$.
\end{cor}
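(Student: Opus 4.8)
The plan is to deduce this corollary from Theorem \ref{nodal} by a standard deformation-theoretic argument that smooths nodes independently inside $|L|$. First I would recall the basic dimension count: on an abelian surface $S$ with polarization $L$ of type $(d_1,d_2)$, one has $L^2 = 2d_1d_2$, and the arithmetic genus of a curve $C \in |L|$ is $p_a(C) = \frac{1}{2}L^2 + 1 = d_1d_2 + 1$. The linear system $|L|$ has dimension $h^0(L) - 1 = d_1d_2 - 1$. A genus $2$ curve in $|L|$ therefore has $\delta = p_a(C) - g = d_1d_2 - 1$ nodes (using that such curves are automatically nodal by Theorem \ref{nodal}, or more precisely that we are handed one nodal representative). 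This is the maximal possible number of nodes, and it equals $\dim |L|$, which is exactly the numerology that makes the Severi variety story work: smoothing $k$ of the $\delta$ nodes should cut the dimension down by $k$.

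The key steps, in order, are as follows. \textbf{Step 1:} Invoke Theorem \ref{nodal} to obtain a curve $C_0 \in |L|$ with exactly $\delta_0 := d_1d_2 - 1$ nodes and geometric genus $2$, and no other singularities. \textbf{Step 2:} For a nodal curve $C_0$ on a smooth surface $S$, the equisingular deformations of $C_0$ inside $|L|$ that preserve a chosen subset $\Delta$ of $k$ of the nodes (while allowing the rest to smooth) are governed by $H^0(S, L \otimes \mathcal{I}_{\Delta})$, where $\mathcal{I}_\Delta$ is the ideal sheaf of the $k$ assigned nodes; the expected dimension of the locus of such curves is $\dim|L| - k = d_1d_2 - 1 - k$. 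I would show that the nodes impose independent conditions on $|L|$, i.e. $h^0(S, L \otimes \mathcal{I}_\Delta) = h^0(L) - k$ for every $\Delta$ with $|\Delta| = k \le \delta_0$; since the full set of $\delta_0 = \dim|L|$ nodes already cuts the system down to a point (the single curve $C_0$, as $h^0(L\otimes \mathcal I_{\{\delta_0 \text{ nodes}\}})$ must be at least $1$ and at most $h^0(L)-\delta_0 = 1$), independence for the full set forces independence for every subset. \textbf{Step 3:} Conclude that near $C_0$ the locus $V_k \subseteq |L|$ of curves with (at least) $k$ nodes is smooth of dimension $d_1d_2 - 1 - k$; a general point of the component of $V_k$ through $C_0$ is a curve with exactly $k$ nodes and no worse singularities, because the generically assigned nodes deform to nearby nodes while the remaining $\delta_0 - k$ nodes smooth (one uses here that smoothing a node is unobstructed and that these deformations can be performed independently node-by-node — this is where the "standard deformation theory" in the statement enters). \textbf{Step 4:} Since a general such curve has geometric genus $p_a - k = d_1d_2 + 1 - k \ge 2$, and smoothness/dimension of $|L|_\delta$ follow from the independence of conditions in Step 2, we get that $|L|_\delta$ is nonempty and smooth of dimension $d_1d_2 - 1 - \delta$ for all $0 \le \delta \le d_1d_2 - 1$.

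The main obstacle I anticipate is \textbf{Step 2} — verifying that the $\delta_0$ nodes of $C_0$ impose independent conditions on $|L|$, equivalently that the Severi variety is smooth of the expected dimension at $[C_0]$. On a K3 surface this is automatic because $H^1(S, L) = 0$ and the relevant obstruction space vanishes; on an abelian surface $H^1(S,L) = 0$ still holds for $L$ a polarization (ample, so Kodaira vanishing / the Mumford vanishing for abelian varieties gives $h^i(L)=0$ for $i>0$), so $\chi(L) = h^0(L) = d_1d_2$, and the general theory of Severi varieties on surfaces with $H^1(L) = 0$ then yields exactly the smoothness and dimension claims. So in fact I expect Step 2 to go through cleanly via this vanishing, and the only genuinely delicate point is making sure Theorem \ref{nodal} really supplies a curve whose \emph{only} singularities are these $\delta_0$ nodes (no infinitely near or non-reduced structure), which is part of what "nodal curve of genus $2$" means and is guaranteed by the statement. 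I would then just cite a standard reference (e.g. Sernesi's deformation theory book, or Tannenbaum) for the independent-smoothing of nodes and wrap up.
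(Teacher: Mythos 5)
Your overall route is the same as the paper's: take the maximally nodal genus~$2$ curve supplied by Theorem \ref{nodal} (which indeed has $p_a(C)-2=d_1d_2-1$ nodes), smooth subsets of nodes independently, and invoke regularity of the Severi variety at nodal curves. The paper disposes of this in two sentences, citing the proofs of Propositions 1.1 and 1.2 of Lange--Sernesi \cite{LS2} for the smoothness and dimension statement, with the explicit remark that this works because $\omega_S$ is trivial.

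The genuine gap is in your justification of Step 2. You claim that $h^0(S,L\otimes\mathcal{I}_\Delta)$ for the full set $\Delta$ of $\delta_0=\dim|L|$ nodes is ``at most $h^0(L)-\delta_0=1$'', but the a priori inequality runs the other way: imposing $k$ points can only cut $h^0$ by at most $k$, so $h^0(L\otimes\mathcal{I}_\Delta)\geq h^0(L)-|\Delta|$ always, and the upper bound you assert is precisely the independence statement you are trying to prove. As written, the argument is circular: nothing rules out, a priori, a positive-dimensional family of curves in $|L|$ through all $\delta_0$ nodes of $C_0$. Your fallback --- that $H^1(S,L)=0$ plus ``the general theory of Severi varieties on surfaces with $H^1(L)=0$'' yields smoothness --- is also not the correct mechanism: $H^1(L)=0$ controls the Hilbert scheme of the linear system, not the equisingular locus, and there are surfaces with $H^1(L)=0$ (e.g.\ of general type) whose Severi varieties are singular or of excess dimension. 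The operative fact on abelian (and K3) surfaces is the triviality of $\omega_S$: by adjunction $N_{C/S}\cong\omega_C$, and pulling back to the normalization $\nu:\widetilde C\to C$ identifies the tangent space to the equigeneric locus with $H^0(\widetilde C,\omega_{\widetilde C})$, of dimension $g=2$; this is exactly what the proofs of \cite[Props.~1.1, 1.2]{LS2} exploit, and it is the step your proposal needs to either prove or cite correctly. Once that is in place, your Steps 3--4 (independent smoothing of nodes, no new singularities for a generic deformation) go through as in the paper.
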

This generalizes \cite[Thm. 1.1]{KLM} to nonprimitive linear systems. Note that, since $S$ has trivial canonical bundle, the regularity of $|L|_\delta$ stated in Corollary \ref{severi} follows for free from its nonemptiness  by the proofs of Propositions 1.1 and 1.2 in \cite{LS2}. We mention that the irreducible components of the Severi varieties on a general primitively polarized abelian surface have been determined very recently by Zahariuc in \cite{Za}.

We now spend some words on the proofs of Theorems \ref{genus two} and \ref{nodal}. In contrast to the methods proposed  in \cite{ch1,ch2,KLM}, we need neither to degenerate $S$ to a singular surface nor to specialize it to an abelian surface with large Neron-Severi group. Instead, we exploit the universal property of Jacobians in order to translate the if part of Theorem \ref{genus two} and Theorem \ref{nodal} into the following statement concerning Brill-Noether theory on a general curve of genus $2$:
\begin{thm}\label{thm:plane}
Let $[C]\in \M_2$ be a general genus $2$ curve and fix any integer $d\geq 4$.  If $C$ admits a $g^2_d$ totally ramified at three points $P_1,P_2,P_3$, then $d$ is even and $P_1,P_2,P_3$ are Weierstrass points.
\end{thm}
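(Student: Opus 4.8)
The plan is first to unwind the hypothesis. Let $L$ be the degree-$d$ line bundle underlying the $g^2_d$ and $V\subseteq H^0(C,L)$ the three-dimensional space of sections defining it. That the $g^2_d$ is totally ramified at $P_i$ means that $V$ contains a section $s_i$ with $\mathrm{div}(s_i)=dP_i$; hence $L\cong\cO_C(dP_1)\cong\cO_C(dP_2)\cong\cO_C(dP_3)$, so $dP_1\sim dP_2\sim dP_3$. Since $d\geq 4$ we have $h^0(L)=d-1\geq 3$, and the $P_i$ are pairwise distinct. I would then check that $s_1,s_2,s_3$ are linearly independent, so that $V=\langle s_1,s_2,s_3\rangle$ and, since $\gcd(dP_1,dP_2,dP_3)=0$, the linear system is base-point free: were the $s_i$ dependent, one of the pencils $\langle s_i,s_j\rangle$ would be a base-point free $g^1_d$ with three total ramification points, which Riemann--Hurwitz forbids for $d\geq 6$ and which for $d\in\{4,5\}$ occurs only on a family of genus-$2$ curves of dimension $<3$ over $\M_2$ (count branch points modulo $\mathrm{PGL}_2$).

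The heart of the matter is the analysis of the morphism $\phi=[s_1:s_2:s_3]\colon C\to\PP^2$. Let $\Gamma=\phi(C)$, which is irreducible and (by independence of the $s_i$) not a line; put $e=\deg(\phi\colon C\to\Gamma)$ and $d_0=\deg\Gamma$, so $d=e\,d_0\geq 2e$. Since $\phi^{*}\ell_i=dP_i$ for the coordinate line $\ell_i=\{x_i=0\}$, the line $\ell_i$ meets $\Gamma$ only at $Q_i:=\phi(P_i)$, necessarily with multiplicity $d_0$ by B\'ezout; moreover $\phi^{-1}(Q_i)=\{P_i\}$, so $\Gamma$ is unibranch at $Q_i$, and writing $\pi\colon C\to\widetilde\Gamma$ for the composition with the normalization, $\pi$ has degree $e$ and is totally ramified at $P_1,P_2,P_3$. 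Let $g_0$ be the geometric genus of $\Gamma$. If $g_0=2$, Riemann--Hurwitz forces $e=1$: $\Gamma$ is a birational plane model of $C$ of degree $d$ with three lines of contact order $d$. If $g_0=1$, then $e=1$ is impossible ($C$ has genus $2$) and $e\geq 2$ is impossible (Riemann--Hurwitz for $\pi$ gives $R=2$, contradicting $R\geq 3(e-1)$). If $g_0=0$ and $e\geq 2$, then $\pi\colon C\to\PP^1$ is a base-point free $g^1_e$ totally ramified at $P_1,P_2,P_3$, so $2e+2=R\geq 3(e-1)$, i.e.\ $e\leq 5$.

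Putting the birational-plane-model case aside, we are thus left with a base-point free pencil $\pi$ of degree $m=e\in\{2,\dots,5\}$ totally ramified at $P_1,P_2,P_3$, with $d=m\,d_0$. If $m=2$, then $\pi$ is the (unique) hyperelliptic pencil, so $P_1,P_2,P_3$ are Weierstrass points and $d=2d_0$ is even --- exactly the assertion; note that the examples with $d=4$ (the conic case $d_0=2$) and $d=2k\geq 6$ (rational plane models with $d_0=k$, $\phi$ factoring through $h$) realize this. If $m\in\{3,4,5\}$, then $C$ lies in the image of the Hurwitz scheme of degree-$m$ covers of $\PP^1$ with three total ramification points (hence at most $2$ further branch points and finitely many monodromy data), a family mapping to $\M_2$ with image of dimension $5-m<3$, so this does not occur for general $[C]$. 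Finally, to exclude for general $[C]$ the birational plane model $\Gamma$ of degree $d$ and geometric genus $2$ with three contact-$d$ lines (the $Q_i$ being smooth or at worst unibranch singular points), I would count dimensions in $|\cO_{\PP^2}(d)|$: having a line of contact order $\geq d$ at a point is of codimension $d-3$, so three such lines impose codimension $3(d-3)$; imposing in addition geometric genus $2$, i.e.\ $\binom{d-1}{2}-2$ more nodes, leaves a family of dimension
\[
\binom{d+2}{2}-1-3(d-3)-\Big(\binom{d-1}{2}-2\Big)=10,
\]
which modulo $\mathrm{PGL}_3$ has dimension $\leq 2$, hence maps to a proper subvariety of $\M_2$.

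The step I expect to demand the most care is precisely this last count: one must verify that the three contact conditions are independent of one another and of the nodal (genus) conditions --- equivalently, that the corresponding tangent spaces meet transversally. Conceptually the whole statement says that the variety of $g^2_d$'s on a general genus-$2$ curve totally ramified at three moving points has the expected, negative, dimension, and the real work (carried out by the case analysis above) is to identify its only excess component, namely the Weierstrass configurations forced to appear when $d$ is even.
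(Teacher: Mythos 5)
Your reduction is sound as far as it goes, and it is genuinely different from the paper's: you work directly on the genus-$2$ curve, using the Stein factorization of $\phi=[s_1:s_2:s_3]$, Riemann--Hurwitz and Hurwitz-scheme dimension counts to dispose of the non-birational cases (the $e=2$, $g_0=0$ case correctly produces the Weierstrass/even-$d$ conclusion, and the $e\in\{3,4,5\}$ and $g_0=1$ cases are correctly excluded). The paper instead degenerates $C$ to a union of two elliptic curves, runs the Eisenbud--Harris limit linear series machinery, and reduces everything to a statement about elliptic curves (Proposition \ref{elliptic}). Up to the last step, your route is more elementary and perfectly viable.

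The problem is that the step you defer at the end is not a verification but the entire technical content of the theorem. To exclude the birational degree-$d$ genus-$2$ plane model with three contact-$d$ lines you need an \emph{upper} bound on the dimension of the corresponding generalized Severi variety; your count of expected codimensions only shows the locus has dimension \emph{at least} $10$, which is useless here. Establishing that the actual dimension does not exceed the expected one is exactly what requires the Caporaso--Harris deformation theory: one must identify the tangent space with $H^0$ of a twist $\overline N_f(-D_1)$ of the normal sheaf of the normalization map and bound its degree, as in diagram \eqref{normal} and estimate \eqref{deg} of the paper. Moreover, your naive count in $|\cO_{\PP^2}(d)|$ silently assumes the residual singularities are $\binom{d-1}{2}-2$ nodes and that the contact points $Q_i$ are smooth points of $\Gamma$; the equigeneric locus contains strata with worse singularities, and you yourself observe $Q_i$ need only be unibranch, so "contact order $d$ with a line" is not even the right linear condition there. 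These are precisely the issues the normal-sheaf formalism is built to handle. Note also that the paper does this analysis only after degenerating to elliptic curves, where $\deg\overline N_f(-D_1)\le 3$ forces very ampleness and hence immersedness for free; on a genus-$2$ curve the analogous bundle has degree at most $5$, which is borderline for very ampleness, so the direct genus-$2$ version of the argument is strictly harder, not easier. As written, your proposal reduces the theorem to an unproved statement that is at least as deep as the one you set out to prove.
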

We refer to Section \S \ref{prima} for the details of this reduction, that we mention here only briefly. The key fact is that any genus $2$ curve $\overline C$ on a $(d_1,d_2)$-polarized abelian surface $S$ with normalization $C$ arises as image of a composition 
\begin{equation}\label{compo}
C\stackrel{u}{\hookrightarrow} J(C)\stackrel{\lambda}{\rightarrow} S,
\end{equation}
where $u$ is the Abel-Jacobi map and $\lambda$ is an isogeny. Three points $P_1,P_2,P_3\in C=u(C)$ identified by $\lambda$ necessarily differ by elements in its kernel. Since the order of any such element is divisible by $d_2$, the three divisors $d_2P_1,d_2P_2,d_2P_3\in C_d$ are linearly equivalent and thus define (for $d_2\geq 4$) a $g^2_{d_2}$ on $C$ totally ramified at three points. Theorem \ref{thm:plane} excludes the existence of such a linear series for $C$ general and odd values of $d_2$, thus implying our main results in these cases.  If instead $d_2$ is even, a $g^2_{d_2}$ totally ramified at three points does exist: as soon as $P_1,P_2,P_3$ are Weierstrass points of $C$, the divisors $2P_1,2P_2,2P_3$ are linearly equivalent and thus the same holds true for $d_2P_1,d_2P_2,d_2P_3$. Conversely, by Theorem \ref{thm:plane}, any $g^2_{d_2}$ with three points of total ramification on $C$ is of this type. This characterization is used in Section \S \ref{prima} both to prove the if part of Theorem \ref{genus two} and Theorem \ref{nodal} for $d_2\equiv\,2\,\mathrm{mod}\,4$, and to provide examples of genus $2$ curves with a triple, $4$-tuple or $6$-tuple point (cf. Examples \ref{quadruplo} and \ref{sestuplo}) when $d_2\equiv\,0\,\mathrm{mod}\,4$ implying the only if part of Theorem \ref{genus two}. These examples are based on the construction of suitable isogenies $\lambda$ as in \eqref{compo} or, equivalently by taking their kernels, suitable  isotropic (with respect to the commutator pairing) subgroups of the group $J(C)[d_1d_2]$ of $d_1d_2$-torsion points of $J(C)$. 

Section \S \ref{seconda} is devoted to the proof of Theorem \ref{thm:plane}. This is done in two steps. First, we degenerate $C$ to the transversal union of two elliptic curves meeting at a point and reduce Theorem \ref{thm:plane} into a statement of Brill-Noether theory with ramification on a general elliptic curve (cf. Proposition \ref{elliptic}). This reduction seriously involves the theory of limit linear series on curves of compact type, for which we refer to the original papers by Eisenbud and Harris \cite{EH1,EH2,EH3}. Proposition \ref{elliptic} is then proved by an infinitesimal study of a {\em generalized Severi variety} (cf. \cite{CH,Za2}).

\textbf{Acknowledgements:} We are especially grateful to Nicolò Sibilla for numerous valuable conversations on the topic and to Alessandro D'Andrea for his substantial help in the proof of Lemma \ref{group}. We have benefited from interesting correspondence with Igor Dolgachev. The first named author has been partially supported by grant n. 261756 of the Research Council of Norway and by the Trond Mohn Foundation.

\section{Polarized isogenies and proof of the main theorems}\label{prima}
In this section we review some known facts concerning polarized isogenies and genus $2$ curves on complex abelian surfaces and reduce the proof of Theorems \ref{genus two} and \ref{nodal} to a statement concerning Brill-Noether theory with prescribed ramification on a general curve of genus $2$.  

\subsection{Polarized isogenies and genus $2$ curves}Let $S$ be an abelian surface defined over $\mathbb{C}$ and consider a genus $2$ curve $\overline{C}\subset S$ such that the line bundle $L:=\mathcal{O}_S(\overline{C})$ is a polarization of type $(d_1,d_2)$. The normalization map $\nu:C\to \overline C\subset S$ then factors as
$$
C\stackrel{u}{\hookrightarrow} J(C)\stackrel{\lambda}{\rightarrow} S,
$$
where $u$ is the Abel-Jacobi map (that is is an embedding only defined up to translation) and $\lambda$ is an isogeny. We set $A:=J(C)$.  

By the Push-Pull formula, the above isogeny $\lambda$ has degree $d_1d_2$ and thus $\lambda^* L\simeq L_1^{\otimes d_1d_2}$, where $L_1$ is a principal polarization on $A$. We write $A=V/\Lambda$, where $V$ is a $2$-dimensional $\mathbb{C}$-vector space and $\Lambda$ is a rank $4$ lattice. Chosen a symplectic basis $\lambda_1,\lambda_2,\mu_1,\mu_2$ of $\Lambda$, we denote by  $\mathfrak e_1':=\lambda_1/(d_1d_2)$, $\mathfrak e_2':=\lambda_2/(d_1d_2)$, $\mathfrak f_1':=\mu_1/(d_1d_2)$, $\mathfrak f_2':=\mu_2/(d_1d_2)$ the standard generators of the group $A[d_1d_2]$ of $d_1d_2$-torsion points of $A$. By definition, the commutator pairing on $A[d_1d_2]$ is the nondegenerate multiplicative alternating form
$$
e_{d_1d_2}:A[d_1d_2]\times A[d_1d_2]\to \mathbb{C}^*
$$
that takes value $1$ on all pairs of standard generators with the only two following exceptions:
\begin{equation}\label{pair}
e_{d_1d_2}(\mathfrak e_1',\mathfrak f_1')=e_{d_1d_2}(\mathfrak e_2',\mathfrak f_2')=e^{\frac{2\pi i}{d_1d_2}}.
\end{equation}
For a fixed principally polarized abelian surface $A$ with a fixed theta divisor $\Theta$, by \cite[\S 23]{Mu} there is a bijection between the following two sets:
\begin{itemize}
\item[(*)] polarized isogenies $\lambda:A \to S$ onto abelian surfaces $S$ such that $\lambda(\Theta)\in |L|$ for some polarization $L$ on $S$ of type $(d_1,d_2)$;
\item[(**)] isotropic subgroups $G$ of $A[d_1d_2]$ of cardinality $d_1d_2$ such that $G^{\perp}/G\simeq \mathbb{Z}_{d_1}^{\oplus 2}\oplus\mathbb{Z}_{d_2}^{\oplus 2}$.
\end{itemize}
Indeed, the kernel $G$ of any isogeny $\lambda$ in (*) is a subgroup  of $A[d_1d_2]$ in (**); furthermore, $G^{\perp}/G$ is isomorphic to the kernel $K(L)$ of the isogeny  defined by $L$:
\begin{equation*}
\phi_L:S\rightarrow \hat S,\,\,\,\, \phi_L(x)=t_x^*L\otimes L^\vee,
\end{equation*}
where $t_x$ denotes the translation by $x$ on $S$. Viceversa, given a subgroup $G$ in (**), the quotient map $\lambda:A\to A/G$ is an isogeny as in (*).

Given $\lambda$ as in (*), let
$$
\hat\lambda: \hat S\to \hat A
$$
be its dual isogeny and denote by $\hat L$ the dual polarization of $L$. Again by \cite[\S 23]{Mu} the kernel $\hat G$ of $\hat\lambda$ is a maximal isotropic subgroup of $K(\hat L)\simeq \mathbb Z_{d_1}^{\oplus 2}\oplus\mathbb Z_{d_2}^{\oplus 2}$. On the other hand, $\hat G$ is the character group of $G$ (cf. \cite[Prop. 2.4.3]{BL}) and thus $\hat G\simeq G$. In particular, the order of any element of $G=\ker \lambda$ divides $d_2$.

\subsection{Reduction of Theorem \ref{genus two} to Theorem \ref{thm:plane} for odd values of $d_2$} Since isogenies are \'etale, all singularities of the image $\lambda(\Theta)$ of a theta divisor  under an isogeny $\lambda$ as in (*) are ordinary (cf. \cite[Prop. 2.2]{LS2}). The only pathology that might prevent $\lambda(\Theta)$ from being nodal is thus  the existence of three points $x,y,z\in \Theta$ such that $\lambda(x)=\lambda(y)=\lambda(z)$. Since the order of any element in the kernel of $\lambda$ divides $d_2$, such a triple of points $x,y,z$ would be identified by the multiplication map
$$
m_{d_2}:A\to A;
$$
equivalently, if $A=J(C)$,  the three divisors $d_2 x,d_2 y,d_2 z$ on $C$ would be linearly equivalent. For $d_2\geq 4$ this implies the existence of a $g^2_{d_2}$ on $C$ totally ramified at $x,y,z$. In the cases $d_2=2$ and $d_2=3$ the same conclusion holds up to replacing $d_2$ with a multiple of it. Theorem \ref{genus two} for odd values of $d_2$ then follows from Theorem \ref{thm:plane}.

\subsection{Theorem \ref{genus two} for even values of $d_2$} Theorem \ref{thm:plane} also implies that the image of a theta divisor under an isogeny $\lambda$ as in (*) for even values of $d_2$ may have non-nodal singularities only at the image of its Weierstrass points. In order to analize this possibility, we denote by $\mathfrak e_1:=\lambda_1/2$, $\mathfrak e_2:=\lambda_2/2$, $\mathfrak f_1:=\mu_1/2$, $\mathfrak f_2:=\mu_2/2$ the standard generators of the group $A[2]$ of $2$-torsion points  of $A$. Note that  
\begin{equation}\label{easy}
\e_i=\frac{d_1d_2}{2}\e_i',\,\,\,\f_i=\frac{d_1d_2}{2}\f_i'\textrm{ for } i\in\{1,2\}.
\end{equation}  
As the Abel-Jacobi map $u:C{\hookrightarrow} J(C)=A$ is defined up to translation, we may assume its image to coincide with a symmetric theta divisor $\Theta$ so that (the image under $u$ of) the six Weierstrass points of $C$ are exactly the $2$-torsion points of $A$ contained in $\Theta$, namely, without loss of generality by \cite[Ex. 10.2.7]{BL}, the points:
\begin{equation}\label{weie}
\mathfrak e_1,\mathfrak f_1,\mathfrak e_1+\mathfrak f_1,\mathfrak e_2,\mathfrak f_2,\mathfrak e_2+\mathfrak f_2.
\end{equation}

Theorem \ref{genus two} for $d_2\equiv  2\, \mathrm{mod}\,4$ then follows from the following Lemma.

\begin{lem}\label{group}
Let $G$ be an isotropic subgroup of $A[d_1d_2]$ such that $|G|=d_1d_2$, $G^{\perp}/G\simeq \mathbb{Z}_{d_1}^{\oplus 2}\oplus\mathbb{Z}_{d_2}^{\oplus 2}$ and at least three among the six $2$-torsion points in \eqref{weie} lie in the same $G$-orbit of $A[d_1d_2]$. Then, one necessarily has $d_2\equiv 0\, \mathrm{mod}\,4$.
\end{lem}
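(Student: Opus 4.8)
The plan is to work entirely inside the finite group $A[d_1d_2]\cong\ZZ_{d_1d_2}^{\oplus 4}$ equipped with the commutator pairing $e_{d_1d_2}$, using the explicit coordinates given by the standard generators $\e_1',\e_2',\f_1',\f_2'$. By \eqref{easy}, the six relevant $2$-torsion points in \eqref{weie} are $\frac{d_1d_2}{2}$ times the six vectors $\e_1',\f_1',\e_1'+\f_1',\e_2',\f_2',\e_2'+\f_2'$. Saying that three of these lie in the same $G$-orbit means their pairwise differences lie in $G$; since each such difference is a $2$-torsion point, we get (up to three or more) nonzero $2$-torsion elements of the form $\frac{d_1d_2}{2}w$, with $w$ a $\ZZ/2$-combination of the $\e_i',\f_i'$, all contained in $G$. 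I would first enumerate, up to the obvious symmetries of the configuration \eqref{weie} (swapping the two ``blocks'' $\{\e_1',\f_1'\}$ and $\{\e_2',\f_2'\}$, and the symmetry within a block), the possible subsets of three Weierstrass points, hence the possible subgroups $H\subseteq G$ they generate; this $H$ is an $\mathbb F_2$-vector space of dimension $1$ or $2$ sitting inside $A[2]$.

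The heart of the argument is then a parity/isotropy obstruction. Since $G$ is isotropic for $e_{d_1d_2}$, the subgroup $H$ must be isotropic too; I would compute $e_{d_1d_2}$ on pairs of the elements $\frac{d_1d_2}{2}\e_i'$, etc., using \eqref{pair} and bilinearity, obtaining values that are $(-1)^{(d_1d_2/2)\cdot(\text{something})}$ — i.e.\ $\pm 1$ depending on $d_1d_2$ mod $4$ (or mod $2$). This already kills the configurations containing two ``conjugate'' points like $\e_1$ and $\f_1$ unless a divisibility condition holds. For the configurations that survive isotropy, the remaining constraint is the structural one $G^\perp/G\cong\ZZ_{d_1}^{\oplus2}\oplus\ZZ_{d_2}^{\oplus2}$ together with $|G|=d_1d_2$: I would argue that $G$, containing $H$, forces the quotient $G^\perp/G$ to have $2$-torsion subgroup of a shape incompatible with $\ZZ_{d_1}^{\oplus2}\oplus\ZZ_{d_2}^{\oplus2}$ unless $4\mid d_2$. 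Concretely, I expect to show that if $H$ has a generator $\frac{d_1d_2}{2}w$ lying in $G$ but $4\nmid d_2$, then the element $\frac{d_2}{2}\cdot(\text{lift of }w)$ of order $2d_1$ lands in $G^\perp\setminus G$ and generates a $\ZZ_{2d_1}$ or $\ZZ_{2d_2'}$ summand that cannot be accommodated, deriving a contradiction with the elementary divisors of $G^\perp/G$.

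Carrying this out requires a careful bookkeeping of orders: an element of $G$ of the form $\frac{d_1d_2}{2}w$ has order $2$, and I must track how the cyclic factors of $A[d_1d_2]$ decompose relative to $G$. The key inputs are: (i) $\hat G\cong G$ (from the excerpt), so $G$ is a maximal isotropic subgroup and $|G^\perp|=d_1d_2\cdot|G|/\,|A[d_1d_2]|^{0}$ computed via nondegeneracy of $e_{d_1d_2}$, giving $|G^\perp|=d_1^2d_2^2/d_1d_2\cdot d_1d_2=\ldots$ — more precisely $|G^\perp/G|=(d_1d_2)^2/(d_1d_2)^2\cdot|A[d_1d_2]|=d_1^2d_2^2$, matching $|\ZZ_{d_1}^{\oplus2}\oplus\ZZ_{d_2}^{\oplus2}|$; and (ii) the induced nondegenerate pairing on $G^\perp/G$. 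The main obstacle I anticipate is the last configuration, where the three Weierstrass points span a $2$-dimensional $\mathbb F_2$-subspace $H$ containing one ``conjugate pair'' worth of pairing data: here isotropy of $H$ alone may already force $4\mid d_1d_2$, and one must still bootstrap from $4\mid d_1d_2$ to $4\mid d_2$ by exploiting that the $d_1$-part and the $d_2$-part of $A[d_1d_2]$ play asymmetric roles in the structure $G^\perp/G\cong\ZZ_{d_1}^{\oplus2}\oplus\ZZ_{d_2}^{\oplus2}$ — the $2$-torsion we produced must be ``absorbed'' into the $\ZZ_{d_2}$ factors, not the $\ZZ_{d_1}$ ones, and this is precisely where the full strength of the structural hypothesis on $G^\perp/G$ (rather than merely $|G|=d_1d_2$) enters. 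This delicate group-theoretic step is the one for which, per the acknowledgements, external help was enlisted, so I would expect the actual proof to organize it via a clean case analysis on $\gcd(2,d_2/\gcd(d_1,d_2))$ or an explicit choice of symplectic coordinates adapted to $G$.
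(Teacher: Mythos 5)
Your two-phase outline --- first use isotropy of $G$ on the $2$-torsion differences to extract a divisibility condition, then use the structure of $G^\perp/G$ to upgrade it to $4\mid d_2$ --- matches the architecture of the paper's proof, and your first phase is essentially correct: three points of \eqref{weie} in a common $G$-orbit produce two independent elements $g_1,g_2\in G\cap A[2]$ with $e_{d_1d_2}(g_1,g_2)=e^{\pm 2\pi i d_1d_2/4}$, forcing $4\mid d_1d_2$. One small correction: this happens in \emph{every} configuration, not only those containing a ``conjugate pair''; all pairwise differences of points of \eqref{weie} pair to a primitive value, which is why the paper reduces at once to the single set \eqref{set}. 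Note also that $H$ always has $\mathbb F_2$-dimension exactly $2$ (the three points are distinct), and that $G$ is \emph{not} maximal isotropic in $A[d_1d_2]$ (a maximal isotropic subgroup there has order $(d_1d_2)^2$); fortunately neither point is load-bearing.

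The genuine gap is in your second phase. The concrete mechanism you propose --- that a single generator $\frac{d_1d_2}{2}w\in G$ with $4\nmid d_2$ yields an element $\frac{d_2}{2}w$ of order $2d_1$ in $G^\perp\setminus G$ incompatible with the elementary divisors of $G^\perp/G$ --- fails on two counts. First, $\frac{d_2}{2}w\in G^\perp$ is unjustified: isotropy only gives $e_{d_1d_2}(w,g)^{d_1d_2/2}=1$ for $g\in G$, so $e_{d_1d_2}(\frac{d_2}{2}w,g)$ is a $d_1$-th root of unity that need not be trivial. Second, and more fundamentally, the single-element statement is false: the group $G=\langle d_1\e_1',d_2\e_2'\rangle$ appearing in the paper's reduction of Theorem \ref{nodal} contains $\e_1=\frac{d_2}{2}\cdot d_1\e_1'$ for \emph{every} even $d_2$, including $d_2\equiv 2\bmod 4$, and satisfies all the hypotheses on $G$. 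So no contradiction can be extracted from one $2$-torsion element alone; one must use $g_1$ and $g_2$ jointly, via the fact that $w_1=\frac{2g_1}{d_1d_2}$ and $w_2=\frac{2g_2}{d_1d_2}$ have order $d_1d_2$ and pair to a \emph{primitive} $d_1d_2$-th root of unity. The paper exploits this by moving $\langle g_1,g_2\rangle$ to $K=\langle\e_1,\f_1\rangle$ with a symplectic automorphism, computing $K^\perp/K\simeq\ZZ_{d_1d_2/4}^{\oplus 4}\oplus\ZZ_4^{\oplus 2}$ with $d_1d_2/4$ odd when $d_1\equiv d_2\equiv 2\bmod 4$, and observing that the sandwich $K<G<G^\perp<K^\perp$ would force $\ZZ_2^{\oplus 4}$ (the $2$-part of $\ZZ_{d_1}^{\oplus 2}\oplus\ZZ_{d_2}^{\oplus 2}$) to be a quotient of a subgroup of $\ZZ_4^{\oplus 2}$, which is impossible. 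This comparison of $2$-primary parts across the sandwich is the idea your plan is missing.
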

\begin{proof}
Up to exchanging the $\e_i$'s with the $\f_i$'s and up to relabelling the indices $i$, we may assume that one of the three points in the same $G$-orbit of $A[d_1d_2]$ is $\e_1$. It follows that $G$ contains at least two elements $g_1,g_2$ in the following set:
\begin{equation}\label{set}
\left\{\e_1+\f_1,\f_1,\e_2+\e_1,\f_2+\e_1,\e_2+\f_2+\e_1\right\}.
\end{equation}

Using \eqref{pair} and \eqref{easy} one obtains $e_{d_1d_2}(g_1,g_2)=e^{\frac{2d_1d_2\pi i}{4}}$ (up to exchanging $g_1$ and $g_2$), and thus 
\begin{equation}\label{film}
d_1d_2\equiv 0\, \mathrm{mod}\, 4
\end{equation} since $G$ is isotropic. 

In order to exclude the case $d_2\equiv 2\, \mathrm{mod}\, 4$ (that would also imply $d_1\equiv 2\, \mathrm{mod}\, 4$ by \eqref{film}), we proceed by contradiction.  Both the elements $\frac{2g_1}{d_1d_2}$ and $\frac{2g_2}{d_1d_2}$ have order $d_1d_2$ and $e_{d_1d_2}(\frac{2g_1}{d_1d_2},\frac{2g_2}{d_1d_2})=e^{\frac{2\pi i}{d_1d_2}}$; therefore, there exists an automorphism $\varphi$ of $A[d_1d_2]$ preserving the alternating form $e_{d_1d_2}$ such that $\varphi(\frac{2g_1}{d_1d_2})=\e_1'$ and $\varphi(\frac{2g_2}{d_1d_2})=\e_2'$. In particular, we may assume $g_1= \e_1$ and $g_2=\f_1$. We consider the group 
$$K:=\langle  \e_1,\f_1\rangle,$$
and its orthogonal
$K^\perp=\langle 2\e_1',2\f_1',\e_2',\f_2'\rangle$. By \eqref{easy}, one gets
\begin{equation}\label{kappa}
K^\perp/K\simeq \ZZ_{\frac{d_1d_2}{4}}^{\oplus 2}\oplus\ZZ_{d_1d_2}^{\oplus 2}\simeq \ZZ_{\frac{d_1d_2}{4}}^{\oplus 4}\oplus\ZZ_{4}^{\oplus 2}\textrm{ with }\frac{d_1d_2}{4} \textrm{ odd,}
\end{equation}
where the second isomorphism follows from the assumption $d_2\equiv 2\, \mathrm{mod}\, 4$. The inclusions $K<G<G^\perp<K^\perp$ imply that 
\begin{equation*}
K^\perp/K>G^\perp/K\textrm{ and } G^\perp/G\simeq (G^\perp/K)/(G/K);
\end{equation*}
in particular $G^\perp/G$ is a quotient of a subgroup of $K^\perp/K$. However, our assumption yields$$
G^\perp/G\simeq \ZZ_{d_1}^{\oplus 2}\oplus \ZZ_{d_2}^{\oplus 2}\simeq \ZZ_2^{\oplus 4}\oplus\ZZ_{\frac{d_1}{2}}^{\oplus 2}\oplus \ZZ_{\frac{d_2}{2}}^{\oplus 2} \textrm{ with }\frac{d_1}{2},\frac{d_2}{2} \textrm{ odd}.$$
As a consequence, $\ZZ_2^{\oplus 4}$ is a quotient of a subgroup of $K^\perp/K$ and thus of $\ZZ_4^{\oplus 2}$ by \eqref{kappa}. This is a contradiction because the only quotient of a subgroup of $\ZZ_4^{\oplus 2}$ having cardinality $16$ is $\ZZ_4^{\oplus 2}$ itself.
\end{proof}
By the following example, as soon as $d_2\equiv 0\,\mathrm{mod}\, 4$, there do exist isotropic subgroups $G$ of $A[d_1d_2]$ as in Lemma \ref{group}. As a consequence,  a general polarized abelian surface of type $(d_1,d_2)$ contains an unnodal genus $2$ curve and the only if part of Theorem \ref{genus two} follows.

\begin{ex}\label{quadruplo}
We fix positive integers $d_1,d_2,a,b$ such that $d_1|d_2$ and the relation $ab=d_1^2d_2$ holds. We consider the following subgroup of $A[d_1d_2]$
$$G:=\langle a\mathfrak e_1',b\mathfrak f_1',d_2\mathfrak e_2'\rangle.
$$
One has $|G|=\frac{d_1d_2}{a}\cdot\frac{d_1d_2}{b}\cdot d_1=d_1d_2$ and 
$$G^{\perp}=\left\langle \frac{d_1d_2}{b}\mathfrak e_1', \frac{d_1d_2}{a}\mathfrak f_1', \mathfrak e_2', d_1\mathfrak f_2'\right\rangle,
$$
and hence $G^{\perp}/G\simeq \mathbb{Z}_{d_1}^{\oplus2}\oplus \mathbb{Z}_{d_2}^{\oplus 2}$. In particular, the group $G$ corresponds to a polarized isogeny from the principally polarized abelian surface $A$ to a $(d_1,d_2)$-polarized abelian surface $(S,L)$. If both $a$ and $b$ divide $d_1d_2/2$ (and thus $ab=d_1^2d_2$ divides $(d_1d_2)^2/4$, or equivalently, $d_2\equiv 0\,\mathrm{mod}\, 4$), then $G$ contains the $2$-torsion points $\mathfrak e_1,\mathfrak f_1,\mathfrak e_1+\mathfrak f_1$ and we find a genus $2$ curve in $|L|$ with a singularity that is (at least) a triple point.  Note that for any values of $d_1,d_2$ such that $d_1|d_2$ and $d_2\equiv 0\,\mathrm{mod}\, 4$, the integers $a=d_1d_2/2$ and $b=2d_1$ satisfy all the above conditions. 
If moreover $d_1$ is even, then $G$ contains also the point $\mathfrak e_2$ and the image of the theta divisor aquires a $4$-tuple point. 

\end{ex}

To our knowledge the only example  of an unnodal genus $2$ curve on an abelian surface from the published literature until now was the image of the theta divisor under the multiplication by two on a principally polarized abelian surface. This example has played interesting roles in 
various works concerning curve singularities (cf. \cite[Ex. 4.14]{DS}) and Seshadri constants (cf. \cite[Pf. of Prop. 2]{St}, \cite[Rmk. 6.3]{Ba}, \cite[Ex. 4.2]{KSS}). We recall and generalize this example:

\begin{ex}\label{sestuplo}
Assume we have an isogeny $\lambda$ as in (*) identifiying all the six Weierstrass points of $\Theta$. The group $A[2]$ of $2$-torsion points of $A$ is necessarily contained in the kernel of such a $\lambda$, that hence factors through the multiplication  by $2$
$$m_2:A\to A.$$  In fact, the image $m_2(\Theta)$ has only one singularity at the image of the six Weierstrass points, that is thus a $6$-tuple point. Furthermore, $m_2(\Theta)\in |L_1^{\otimes 4}|$ where $L_1$ is a principal polarization on $A$. As soon as $d_1\equiv 0\,\mathrm{mod}\,4$, one constructs a genus $2$ curve with a sixtuple point  on a general $(d_1,d_2)$- polarized abelian surface $(S,L)$ by composing $m_2$ with an isogeny $\lambda':A\to S$ such that $\lambda'(\Theta)\in |L'|$ where $L'$ is a polarization on $S$ satisfying $L'^{\otimes 4}\simeq L$. 
\end{ex}

\begin{rem}\label{singu}
Theorem \ref{thm:plane} along with the fact that any smooth genus $2$ curve has exactly $6$ Weierstrass points yields that  $6$ is the maximal order of any singularity of a genus $2$ curve on a general abelian surface. Examples \ref{quadruplo} and \ref{sestuplo} exhibit genus $2$ curves with a triple, a $4$-tuple or a $6$-tuple point. It is natural to ask whether one can construct a genus $2$ curve with a $5$-tuple point. Such a curve would correspond to an isotropic subgroup $G$ containing exactly $4$ points in the set \eqref{set}. However, any subgroup of $A[2]$ generated by $4$ elements in the set \eqref{set} contains $\e_1,\e_2,\f_1,\f_2$ and thus coincides with $A[2]$. As a consequence, if one requires $G$ to contain four points in \eqref{set}, then $G$ contains the whole $A[2]$ and one falls in Example \ref{sestuplo} thus obtaining a $6$-tuple and not a $5$-tuple point.
\end{rem}

\begin{rem}
While looking for a proof of Theorem \ref{genus two}, we realized that the proof of  \cite[Proposition 3.1]{DL} contains a gap since it somehow assumes that  an isogeny between two principally polarized abelian  surfaces $\lambda:A\to B$ never identifies three or more points on the theta divisor of $A$. In \cite{DL} the abelian varieties are defined over an algebraically closed field $\mathbb{K}$ of arbitrary characteristic and the kernel of $\lambda$ is a maximal isotropic subgroup of $A[p]$ for some prime integer $p\neq\,\mathrm{char}\,\mathbb{K}$. Theorem \ref{thm:plane} repairs the mentioned gap for $\mathbb{K}=\mathbb{C}$.
\end{rem}

\subsection{Reduction of Theorem \ref{nodal} to Theorem \ref{thm:plane}}We conclude this section by proving the following lemma, to which Theorem \ref{nodal} reduces thanks to Theorem \ref{thm:plane}.
\begin{lem}
For any positive integers $d_1,d_2$ with $d_1|d_2$, there exists an isotropic  subgroup $G$ of $A[d_1d_2]$ as in (**) such that any $G$-orbit of $A[d_1d_2]$ contains at most two points in \eqref{weie}.
\end{lem}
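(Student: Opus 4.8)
The plan is to construct the subgroup $G$ explicitly, adapting the family of examples in Example \ref{quadruplo}, and then verify directly that no $G$-orbit swallows three of the six Weierstrass points listed in \eqref{weie}. Recall that in Example \ref{quadruplo} one takes $G:=\langle a\mathfrak e_1',b\mathfrak f_1',d_2\mathfrak e_2'\rangle$ with $ab=d_1^2d_2$ and $d_1\mid d_2$; this automatically satisfies $|G|=d_1d_2$ and $G^\perp/G\simeq\mathbb Z_{d_1}^{\oplus 2}\oplus\mathbb Z_{d_2}^{\oplus 2}$, i.e. condition (**). So the only thing left is to choose $a$ and $b$ so that the orbit condition holds. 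The natural first attempt is to pick $a$ and $b$ so that $G$ contains \emph{none} of the six $2$-torsion points in \eqref{weie}, or at least so that it never contains the difference of three of them simultaneously; by \eqref{easy} the $2$-torsion points are $\frac{d_1d_2}{2}\mathfrak e_i'$, $\frac{d_1d_2}{2}\mathfrak f_i'$ and their sums, so membership of such a point in $G=\langle a\mathfrak e_1',b\mathfrak f_1',d_2\mathfrak e_2'\rangle$ is a concrete congruence condition on $a$, $b$, $d_1$, $d_2$.

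The key computation is therefore: a $G$-orbit in $A[d_1d_2]$ contains two of the points in \eqref{weie} exactly when the difference of those two $2$-torsion points lies in $G$. Since $G$ is cyclic in the $\mathfrak e_1',\mathfrak f_1'$ coordinates only through $a\mathfrak e_1'$ and $b\mathfrak f_1'$ (and through $d_2\mathfrak e_2'$ in the $\mathfrak e_2'$ coordinate, with no $\mathfrak f_2'$ component at all), I would tabulate which of the fifteen pairwise differences $\mathfrak e_1-\mathfrak f_1$, $\mathfrak e_1-\mathfrak e_2$, $\mathfrak f_1-\mathfrak f_2$, etc., can possibly lie in $G$: note immediately that any point of $G$ has trivial $\mathfrak f_2'$-component, which rules out every difference involving exactly one of $\mathfrak f_2$ or $\mathfrak e_2+\mathfrak f_2$. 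The surviving cases reduce to whether $\frac{d_1d_2}{2}$ is a multiple of $a$ (resp. of $b$), i.e. whether $a\mid\frac{d_1d_2}{2}$ and $b\mid\frac{d_1d_2}{2}$. By the discussion in Example \ref{quadruplo}, having \emph{both} of these divisibilities forces $d_2\equiv 0\bmod 4$; so in general we can arrange that \emph{at most one} of $a\mid\frac{d_1d_2}{2}$, $b\mid\frac{d_1d_2}{2}$ holds. The concrete choice I would try is $a=d_1^2d_2$, $b=1$ (so $G=\langle d_1^2d_2\,\mathfrak e_1', \mathfrak f_1', d_2\mathfrak e_2'\rangle$; here $d_1^2d_2\mathfrak e_1' = d_1\mathfrak e_1'$ since $d_1^2d_2 \equiv d_1 \bmod d_1d_2$ when $d_1 \mid d_2$), or symmetric variants, and then check by hand that the resulting $G$ contains at most one $2$-torsion point from \eqref{weie}, hence no orbit contains three of them.

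The main obstacle I anticipate is not conceptual but bookkeeping: making sure the \emph{single} chosen $G$ works uniformly for \emph{all} $d_1,d_2$ with $d_1\mid d_2$, including the $4\mid d_2$ case where, by Lemma \ref{group} and Example \ref{quadruplo}, orbits with three Weierstrass points are known to exist for \emph{some} choices of $G$ — so one has to be careful that our particular $G$ avoids them. Concretely, one must check that even when $d_2\equiv 0\bmod 4$, the asymmetry between $a$ and $b$ (only one divides $\frac{d_1d_2}{2}$) prevents $G$ from containing two independent $2$-torsion points among \eqref{weie}; if $G$ contains at most one such point, then a fortiori no orbit contains two \emph{distinct differences} and hence at most two of the six points. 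A secondary subtlety: one should double-check that the pair with difference $\mathfrak e_1-(\mathfrak e_1+\mathfrak f_1)=\mathfrak f_1$ and $\mathfrak f_1-(\mathfrak e_1+\mathfrak f_1)=\mathfrak e_1$ type relations don't let a single generator produce three collinear points — i.e. that $\{\mathfrak e_1,\mathfrak f_1,\mathfrak e_1+\mathfrak f_1\}$ all lying in one orbit would require \emph{two} independent differences in $G$, which the divisibility restriction forbids. Once the table of cases is laid out, each entry is a one-line verification, so I would present it as a short case analysis rather than a long computation.
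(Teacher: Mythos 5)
Your overall strategy is the same as the paper's: write down one explicit isotropic subgroup and check, via the pairwise differences of the six points in \eqref{weie}, that no $G$-orbit captures three of them. The difference-tabulation part of your plan is sound (and your observation that a triple in one orbit forces \emph{two} independent differences into $G$, hence both $a\mid\frac{d_1d_2}{2}$ and $b\mid\frac{d_1d_2}{2}$, is exactly the right reduction). However, your concrete choice of group is wrong. You take $a=d_1^2d_2$, $b=1$ and justify it by ``$d_1^2d_2\equiv d_1 \bmod d_1d_2$ when $d_1\mid d_2$''; this congruence is false: $d_1^2d_2=d_1\cdot(d_1d_2)\equiv 0 \bmod d_1d_2$, so $a\e_1'=0$ in $A[d_1d_2]$. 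Consequently your $G$ is $\langle \f_1',d_2\e_2'\rangle$, which has order $d_1d_2\cdot d_1=d_1^2d_2$, not $d_1d_2$, so condition (**) fails for every $d_1>1$. The underlying issue is that the order and orthogonal-complement formulas of Example \ref{quadruplo} implicitly require $a$ and $b$ to divide $d_1d_2$ (otherwise $\langle a\e_1'\rangle$ does not have order $d_1d_2/a$), and $a=d_1^2d_2$ violates this.

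The repair is immediate and lands you on the paper's own group: take $(a,b)=(d_1,d_1d_2)$ (or symmetrically $(d_1d_2,d_1)$), which satisfies $ab=d_1^2d_2$ and $a,b\mid d_1d_2$. Since $b\f_1'=d_1d_2\f_1'=0$, this gives $G=\langle d_1\e_1',d_2\e_2'\rangle$, exactly the group in the paper, with $a=d_1\mid\frac{d_1d_2}{2}$ possible (iff $d_2$ is even) but $b=d_1d_2\nmid\frac{d_1d_2}{2}$, so at most one of your two divisibilities ever holds and your case analysis then closes the argument. One further small caution for the write-up: it is not enough to check which of the six listed points lie in $G$, because several of the fifteen relevant differences (e.g.\ $\e_1+\e_2$ or $\f_1+\e_2$) are $2$-torsion points \emph{not} appearing in \eqref{weie}; your full table of differences handles these, but the shortcut ``$G$ contains at most one $2$-torsion point from \eqref{weie}'' stated near the end does not.
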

\begin{proof}
The group
$$
G:=\langle d_1 \mathfrak e_1', d_2\mathfrak e_2'\rangle
$$
is clearly isotropic. By \eqref{easy}, $G$ contains $\e_1$ if $d_2$ is even and $\e_2$ if $d_1$ is even;  in no case $G$ contains other elements of order $2$.  As a consequence, the only set of points contained in \eqref{weie} and lying in the same $G$-orbit are $\{\mathfrak f_1,\e_1+\f_1\}$ for even values of $d_2$, and $\{\mathfrak f_2,\e_2+\f_2\}$ if $d_1$ is even. One easily checks that
$$
G^\perp:=\langle \mathfrak e_1', d_2\mathfrak  f_1',  \mathfrak e_2', d_1\mathfrak  f_2'\rangle
$$
so that $G^{\perp}/G\simeq \mathbb{Z}_{d_1}^{\oplus 2}\oplus\mathbb{Z}_{d_2}^{\oplus 2}$.

\end{proof}

\section{Proof of Theorem \ref{thm:plane}}\label{seconda}

We proceed by degeneration to a curve $C_0$ having two irreducible smooth elliptic components $E_1$ and $E_2$ meeting at a point $P$. 

Let $\pi:\mathcal{C}\to B$ be a flat family of curves over a {\em local} one-dimensional base $B$ (that is, $B=\mathrm{Spec}R$ for some discrete valuation ring $R$) with special fiber $C_0$ and generic fiber $C_b$ being a smooth irreducible curve of genus $2$; also assume that the total space $\mathcal{C}$ is smooth. A relative linear series of type $g^2_d$ on $\mathcal{C}$ is a pair $\mathfrak{l}=(\mathcal{A},\mathcal{V})$ such that $\mathcal{A}$ is a line bundle on $\mathcal{C}$ flat over $B$ and $\mathcal{V}$ is a rank-$3$ subbundle of $\pi_*\mathcal{A}$. We assume the existence  of a linear series $l_b=(A_b,V_b)$ of type $g^2_d$ on the generic fiber $C_b$ of $\pi$ totally ramified at three points. Possibly after finitely many sequences of base changes and blow-ups at the nodes of the special fiber, we obtain a family $\pi':\mathcal{C'}\to B'$ such that:
\begin{itemize}
\item[(i)] the generic fiber of $\pi'$ is again $C_b$;
\item[(ii)]  the special fiber $C_0'$ of $\pi'$ is obtained from $C_0$ by inserting a chain of $h\geq 0$ rational curves at the node $P$;
\item[(iii)] $l_b$ is the restriction of a relative linear series $\mathfrak{l}=(\mathcal{A},\mathcal{V})$ on $\mathcal{C}'$;
\item[(iv)] there are three sections  $\sigma_1,\sigma_2,\sigma_3$ of $\pi'$ such that $l_b$ is totally ramified at the points $\sigma_1(b),\sigma_2(b),\sigma_3(b)$; 
\item[(v)] the points  $x_1:=\sigma_1(0)$, $x_2:=\sigma_2(0)$, $x_3:=\sigma_3(0)$ lie in the smooth locus of $C_0'$ (but are allowed to coincide).
\end{itemize}
We label the rational components inserted at $P$ with $\gamma_1, \ldots, \gamma_h$ and set $\gamma_0:=E_1$, $\gamma_{h+1}:=E_2$ and $P_i:=\gamma_{i-1}\cap \gamma_i$ for $1\leq i\leq h+1$. The restriction of $\mathfrak{l}$ to $C_0'$ is a (crude) limit linear series \cite{EH2}, whose {\em aspect} on $\gamma_i$ (cf. \cite[Def. p. 348]{EH2}) is denoted by $l_i=(A_i,V_i)$. 
If $P_j\in \gamma_i$, let $\underline{\alpha}^i(P_j)=(\alpha^i_0(P_j),\alpha^i_1(P_j),\alpha^i_2(P_j))$ denote the ramification sequence of $l_i$ at $P_j$. We recall the following compatibility conditions \cite[p. 346]{EH2}:
\begin{equation}\label{lls1}
\alpha^{i-1}_j(P_i)+\alpha^i_{2-j}(P_i)\geq d-2\textrm{   for   }1\leq i\leq h+1\textrm{   and   }0\leq j\leq 2.
\end{equation}
Furthermore, any two points $Q,Q'$ on the same component $\gamma_i$ satisfy:
\begin{equation}\label{lls2}
\alpha^i_j(Q)+\alpha^i_{2-j}(Q')\leq d-2\textrm{   for   }0\leq i\leq h+1\textrm{   and   }0\leq j\leq 2.
\end{equation}
Since $E_2=\gamma_{h+1}$ is elliptic, then $\alpha^{h+1}_1(P_{h+1})\leq d-3$ and thus $\alpha^{h}_1(P_{h+1})\geq 1$ by \eqref{lls1}. Inequality \eqref{lls2} then yields $\alpha^{h}_1(P_h)\leq d-3$. By the same argument, we obtain that 
\begin{equation}\label{cusp}
\alpha^{i-1}_1(P_{i})\geq 1 \textrm{ for }1\leq i\leq h+1.
\end{equation} 
Analogously, using the fact that $E_1=\gamma_0$ is elliptic, one proves that
\begin{equation}\label{cuspbis}
\alpha^{i}_1(P_{i})\geq 1 \textrm{ for }1\leq i\leq h+1.
\end{equation}
In particular, $\gamma_0=E_1$ has at least a cusp at $P_1$ and $\gamma_{h+1}=E_2$ has at least a cusp at $P_{h+1}$.

 If $x_1$ lies on the component $\gamma_i$, then $\alpha^{i}_2(x_1)=d-2$ and thus \eqref{lls2} yields $\alpha^{i}_0(P_i)=0$ as soon as $i\neq 0$ and $\alpha^{i}_0(P_{i+1})=0$ for $i\neq h+1$. By \eqref{lls1}, we get that  both $\alpha^{i-1}_2(P_i)\geq d-2$ if $i\neq 0$ and $\alpha^{i+1}_2(P_{i+1})\geq d-2$ if $i\neq h+1$. Inductively, we obtain 
\begin{align}\label{total1}
&\alpha^j_2(P_{j+1})\geq d-2\textrm{ for }0\leq j\leq i-1\textrm{ (if $i\neq 0$) },\\
\label{total2}&\alpha^j_2(P_j)\geq d-2\textrm{ for }i+1\leq j\leq h+1\textrm{ (if $i\neq h+1$) }.
\end{align} 
We get the same conclusion if $x_2\in \gamma_i$ or $x_3\in \gamma_i$. In particular, $l_0$ has total ramification at $P_1$ as soon as at least one among the points $x_1$,$x_2$,$x_3$ does not lie on $\gamma_0=E_1$. Analogously, if at least one among $x_1$,$x_2$,$x_3$ lies outside of $\gamma_{h+1}=E_2$ we obtain that $P_{h+1}$ is a total ramification point for $l_{h+1}$.

By abuse of notation, we set $\underline{\alpha}^i(x_1)$ to be the $0$-sequence if $x_1$ does not lie on $\gamma_i$, and the same for $x_2$ and $x_3$. In the case where $x_1,x_2,x_3$ are distinct the additivity of the Brill-Noether number (cf. \cite[Lem. 3.6]{EH2}) then yields:
\begin{eqnarray*}
-4 & =&\rho(2,2,d,(0,\ldots,0,d-2),(0,\ldots,0,d-2),(0,\ldots,0,d-2)) \\
&\geq& \rho(1,2,d, \underline{\alpha}^0(P_1), \underline{\alpha}^0(x_1), 
\underline{\alpha}^0(x_2),\underline{\alpha}^0(x_3)) \\
  & & \hspace{0.5cm} + \sum_{i=1}^{h}\rho (0,2,d, 
\underline{\alpha}^i(P_{i}),\underline{\alpha}^i(P_{i+1}), 
\underline{\alpha}^i(x_1), 
\underline{\alpha}^i(x_2),\underline{\alpha}^i(x_3))\\
& & \hspace{1cm}   + \rho(1,2,d, \underline{\alpha}^{h+1}(P_{h+1}), 
\underline{\alpha}^{h+1}(x_1), 
\underline{\alpha}^{h+1}(x_2),\underline{\alpha}^{h+1}(x_3)).
\end{eqnarray*}
If $x_2=x_1$ and $x_3\neq x_1$, the above inequality still holds up to deleting all the $\underline{\alpha}^i(x_2)$. The cases where $x_3$ coincides with $x_1$ and/or $x_2$ can be treated similarly. 
We recall that:
\begin{itemize}
\item[-] the adjusted Brill-Noether number of any linear series on $\mathbb{P}^1$ with respect to any collection of points is nonnegative (cf. \cite[Thm. 1.1]{EH3});
\item[-] the adjusted Brill-Noether number of any linear series on an elliptic curve with respect to any point is nonnegative (cf. \cite[Thm. 1.1]{EH3})
\item[-] the adjusted Brill-Noether number of any $g^2_d$ on an elliptic curve with respect to any two points is $\geq -2$ (cf. \cite[Prop. 4.1]{F}).
\end{itemize}
Concerning the position of the points $x_1,x_2,x_3$, we can thus conclude (up to relabelling them) that either
\begin{itemize}
\item[(a)] $x_1$ lies  on $E_1$, $x_2$ lies on $E_2$ and $x_3$ lies on $\gamma_i$ for some $1\leq i\leq h$, or
\item[(b)] $x_1$ and $x_2$ are distinct and lie on the same elliptic component. 
\end{itemize}
In case (a), one has $\underline{\alpha}^i(x_3)\geq (0,0,d-2)$ and inequalities \eqref{cusp}, \eqref{cuspbis}, \eqref{total1}, \eqref{total2} imply both $\underline{\alpha}^i(P_i)\geq (0,1,d-2)$ and $\underline{\alpha}^i(P_{i+1})\geq (0,1,d-2)$; this contradicts the Pl\"ucker Formula \cite[Prop. 1.1]{EH1} according to which the total ramification of any $g^r_d$ on $\mathbb{P}^1$ equals $(r+1)d-r(r+1)$. 

Thus we necessarily fall in case (b). Without loss of generality, we assume that $x_1,x_2\in E_1=\gamma_0$. If $x_3= x_1$ or $x_3=x_2$, the ramification weight of $l_0$ at $x_3$ is $\geq 2(d-2)$ since it equals the sum of the weights of the ramification points of $C_b$ tending to $x_3$ (cf., e.g., \cite[p. 263]{HM}). On the other hand, $x_3$ cannot be a base point and thus $\underline{\alpha}^0(x_3)=(0,d-2,d-2)$ and this is a contradiction because $E_1$ is elliptic. We conclude that $l_0$ is totally ramified at three distinct points, namely, $x_1,x_2,x_3$ if $x_3\in E_1$ and $x_1,x_2,P_1$ if $x_3\not\in E_1$; in both cases, $l_0$ also has a cusp at $P_1$. We apply the next proposition; in the former case this implies the relation $2x_1\sim 2x_2\sim 2x_3$ on $E_1$, while in the latter case we obtain $2x_1\sim 2x_2\sim 2P_1$ on $E_1$.

Let $\pi'_0:J_{\pi'}\to B'$ be the relative generalized jacobian of the family $\pi'$, whose generic fiber is the jacobian $J(C_b)$ and whose special fiber is the generalized jacobian $J(X_0')$ parametrizing isomorphism classes of line bundles having degree $0$ on every irreducible component of $X_0'$. Hence, one has $J(X_0')\simeq  \mathrm{Pic}^0(E_1)\times\Pic^0(E_2)$ and $\pi'_0$ is a family of smooth principally polarized abelian surfaces. The relative degree-$0$ line bundle $\mathcal{O}_{\mathcal{C}'}(\sigma_2-\sigma_1)$ defines a {\em torsion} section of $\pi_0'$ (since $d\sigma_1(b)\sim d\sigma_2(b)$ by (iv)) intersecting the special fiber $J(X_0')$ in the $2$-torsion point $(\mathcal{O}_{E_1}(x_2-x_1),\mathcal{O}_{E_2})$.  By \cite[Pf. of Prop. VII.3.2 and Cor. VII.3.3]{Mi} (that works for families of abelian varieties of arbitrary dimension), the group of torsion sections of $\pi_0'$  injects in the torsion subgroup of any fiber of $\pi_0'$ and thus we conclude that $\mathcal{O}_{\mathcal{C}'}(\sigma_2-\sigma_1)$ is $2$-torsion. In particular, on the generic fiber $C_b$ of $\pi'$ the divisors $2\sigma_1(b)$ and $2\sigma_2(b)$ are linearly equivalent, that is, $\sigma_1(b)$ and $\sigma_2(b)$ are Weierstrass points.

We claim that $\sigma_3(b)$ is a Weierstrass point, as well (this is clear in the case where $2x_1\sim 2x_2\sim 2x_3$ but needs some work when $2x_1\sim 2x_2\sim 2P_1$). Let $\iota_b$ be the hyperelliptic involution on $C_b$ and set $\sigma_4(b):=\iota_b(\sigma_3(b))$. By contradiction, we assume $\sigma_4(b)\neq \sigma_3(b)$. As $d$ is even, then $d\sigma_3(b)\sim d\sigma_1(b)\sim \frac{d}{2}(\sigma_3(b)+\sigma_4(b))$ and thus $d\sigma_4(b)\sim d\sigma_3(b)\sim d\sigma_1(b)$. As a consequence, the linear series $l_b':=(\mathcal{O}_{C_b}(d\sigma_1(b)),\langle d\sigma_1(b),d\sigma_3(b),d\sigma_4(b)\rangle)$  is a $g^2_d$ on $C_b$ totally ramified at $\sigma_1(b),\sigma_3(b),\sigma_4(b)$. The first part of the proof applied to $l_b'$ thus yields that at least two points among $\sigma_1(b),\sigma_3(b),\sigma_4(b)$ are Weierstrass points of $C_b$ and thus a contradiction.

\begin{prop}\label{elliptic}
Fix an integer $d\geq 3$. If a general elliptic curve possesses a $g^2_d$ totally ramified at three points $P_1,P_2,P_3$ and with a cusp, then $d$ is even, the $g^2_d$ is not birational and the relation $2P_1\sim2P_2\sim2P_3$ holds.
\end{prop}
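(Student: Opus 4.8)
The plan is to convert the $g^2_d$ into an auxiliary degree-$d$ cover $E\to\mathbb{P}^1$, squeeze out everything forced by Riemann--Hurwitz, and isolate the one residual statement --- that this cover is invariant under $[-1]$ --- which I would then prove by the announced infinitesimal study of a generalized Severi variety.

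\emph{Set-up.} After a translation I may assume $P_1=O$ is the origin of $E$. Since the divisor $dP_i$ belongs to the $g^2_d$ for $i=1,2,3$, one gets $dP_1\sim dP_2\sim dP_3$, so $P_2,P_3\in E[d]\setminus\{O\}$ and the line bundle is $\mathcal{O}_E(dO)$; the sections with divisors $dP_1,dP_2,dP_3$, namely $1$ and the functions $f_2,f_3$ determined by $\operatorname{div}(f_i)=dP_i-dO$, all lie in $V$, and since $\dim V=3$ they span it: $V=\langle 1,f_2,f_3\rangle$. (One checks that $V$ has no base point, since a base point disjoint from the $P_i$ would make $dP_i-B$ non-effective.) In particular the whole configuration depends only on $[E]\in\mathcal{M}_{1,1}$ and the choice of $P_1$. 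Now put $h:=f_2/f_3$, so $\operatorname{div}(h)=dP_2-dP_3$: this is a degree-$d$ morphism $E\to\mathbb{P}^1$, totally ramified over $0$ at $P_2$ and over $\infty$ at $P_3$. Riemann--Hurwitz leaves exactly two further units of ramification for $h$, and $\operatorname{div}(d\log h)=-P_2-P_3+(\text{that ramification})$ shows they are carried by a divisor $\sim P_2+P_3$.

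\emph{The cusp and the cover $h$.} Wherever the cusp lies, the requirement that every section of $V$ vanishing there vanish to order $\ge 2$ translates, in the basis $1,f_2,f_3$, into: both $f_2$ and $f_3$ --- hence $h$ --- are ramified at that point. Comparing with the two-unit budget for $h$: either it is a point of ramification index $3$ absorbing all of it, so that $h$ has only three branch points and a \emph{rigid} source (impossible for a general $E$), or it is a simple ramification point $Q$ of $h$, paired with a second simple ramification point $Q'$ satisfying $Q\oplus Q'=P_2\oplus P_3$ in $(E,O)$. Moreover, by the $\mathfrak{S}_3$-symmetry of the hypothesis in $P_1,P_2,P_3$ one may, if the cusp is at one of them, take that point as the origin, in which case a local computation with $d\log h=d\log f_2-d\log f_3$ shows $O$ is itself one of $Q,Q'$.

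\emph{The crux and its consequences.} The key remaining point, and the one I expect to be hardest, is that for a general $E$ the cover $h$ is $[-1]$-invariant --- equivalently $P_2,P_3\in E[2]$, equivalently the $g^2_d$ is pulled back from $E/[-1]\cong\mathbb{P}^1$. This is where the infinitesimal study enters: I would realize the data $(E,P_1,P_2,P_3,V)$ with three points of total ramification and a cusp as points of a generalized Severi variety (cf. \cite{CH,Za2}), compute its Zariski tangent space, and show that its only component dominating $\mathcal{M}_{1,1}$ is the one on which $h$ factors through $E/[-1]$ --- on that component the $2$-torsion points of $E$ are automatically cusps, whereas on the complementary loci the cusp requirement cuts dimension below what a dominant family would need. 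Granting this, everything follows at once: $1,f_2,f_3$ being $[-1]$-invariant, $\phi_V$ factors as $E\xrightarrow{\rho}\mathbb{P}^1\to\mathbb{P}^2$ with $\deg\rho=2$, so the $g^2_d$ is not birational; $f_2$ is an invariant function with a pole of order $d$ at the fixed point $O$, forcing $d$ even; and $\operatorname{div}(f_2)=dP_2-dO$, being $[-1]$-invariant, gives $P_2\in E[2]$, and likewise $P_3$, i.e. $2P_1\sim 2P_2\sim 2P_3$.

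\emph{Main obstacle.} The genuinely difficult part is the crux in the third paragraph --- the infinitesimal analysis of the generalized Severi variety --- which is also essentially the only place where the genericity of $E$ is used; the reduction to, and the Riemann--Hurwitz bookkeeping for, the auxiliary cover $h$ (the first two paragraphs and the deduction in the third) are elementary.
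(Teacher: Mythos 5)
Your elementary reductions are sound: the identification $V=\langle 1,f_2,f_3\rangle$, the auxiliary degree-$d$ cover $h=f_2/f_3$ with its Riemann--Hurwitz budget of two extra units of ramification, the translation of the cusp into ramification of $h$, and the derivation of all three conclusions from the $[-1]$-invariance of $h$ are all correct. But the entire content of the proposition sits in your ``crux'', and what you offer there is an announcement, not an argument --- and the announced route is off target. The generalized Severi variety $V_{d,1}\left(L_1+L_2+L_3,(d,d,d)\right)$ of Caporaso--Harris type parametrizes reduced, irreducible plane curves of \emph{degree $d$} and geometric genus $1$, i.e.\ exactly the \emph{birational} $g^2_d$'s; the non-birational, $[-1]$-invariant series you want to single out have image of degree $d/2$ and simply do not occur as points of this variety, so ``the component on which $h$ factors through $E/[-1]$'' is not one of its components. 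Nor can you hope to show that the genuinely birational components fail to dominate $\mathcal{M}_{1,1}$: the tangent-space computation shows each such component has dimension exactly $3$, which, modulo the $2$-dimensional torus stabilizing the triangle $L_1+L_2+L_3$, is precisely what a dominating family needs. The correct statement to prove is that \emph{inside} each $3$-dimensional component the cuspidal locus is a proper closed, torus-invariant subset, hence of dimension $\le 2$ and with finite image in $\mathcal{M}_{1,1}$.

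That properness is the real work, and it is what is missing. The paper proves it by bounding $T_{[X]}V\hookrightarrow H^0(E,\overline{N_f}(-D_1))$ with $\deg \overline{N_f}(-D_1)=3-\deg(Z-D_0)\le 3$; since $\dim V=3$ all inequalities are equalities, so $Z=D_0$ and $\overline{N_f}(-D_1)$ is a very ample degree-$3$ line bundle on $E$, whose sections vanishing simply at a prescribed point show (as in Caporaso--Harris) that the general member of $V$ is immersed, i.e.\ cusp-free. Combined with the finiteness, up to automorphism, of $g^2_d$'s on a fixed $E$ totally ramified at three points (your observation that $P_2,P_3\in E[d]$), this kills the birational case on a general $E$. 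The non-birational case is then handled separately and elementarily: Stein factorization $f=q\circ p$ with $\deg p=k\ge 2$, total ramification of $p$ at three points forcing $k\le 3$ by Riemann--Hurwitz, and rigidity excluding $k=3$ on a general $E$; the case $k=2$ directly yields $d$ even and $2P_1\sim 2P_2\sim 2P_3$. Your write-up contains a version of the rigidity step (for the cover $h$) but neither the Stein-factorization dichotomy nor any part of the normal-sheaf computation, so the proof is not complete as it stands.
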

\begin{proof}
We first show that, if a $g^2_d$ on a general elliptic curve $E$ totally ramified at three points $P_1,P_2,P_3$ is not birational, then $d$ is even and $2P_1\sim2P_2\sim2P_3$. We consider the Stein factorization of the map $f:E\to\mathbb P^2$ defined by the $g^2_d$ (which is base point free since it admits three points of total ramification):
$$
\xymatrix{
E\ar[d]_p\ar[rd]^f\\
\mathbb P^1\ar[r]_{\hspace{-0.5cm}q}&R\subset \mathbb P^2,
}
$$
where $p$ is a cover of degree $k\geq 2$, $q$ is birational and $R$ is a singular plane curve of degree $d/k$. Since $f$ is totally ramified at $P_1,P_2,P_3$, the same holds for $p$. The Riemann-Hurwitz formula thus implies $k\leq 3$. The case $k=3$ can be excluded  for general $E$ because, by Riemann's Theorem along with the fact that all triples of points on $\mathbb P^1$ are projectively equivalent, there is a unique genus $1$ triple cover of $\mathbb P^1$ totally ramified at three points.

It remains to show that a birational $g^2_d$  on a general elliptic curve $E$ totally ramified at three points $P_1,P_2,P_3$ admits no cusps. Let $X$ be the degree $d$ plane curve image of $E$ under the map defined by the $g^2_d$. We note that the three lines $L_1$, $L_2$, $L_3$ cutting the divisors $dP_1$, $dP_2$, $dP_3$ cannot belong to a pencil of lines through a fixed point of $\mathbb P^2$ since otherwise this pencil would cut a $g^1_d$ on $E$ totally ramified at three points, thus contradicting the Riemann-Hurwitz formula for $d\geq 4$ and the generality of $E$ for $d=3$, as above.
 The curve $X$ defines a point in the {\em generalized Severi variety}
$$V_{d,1}\left(L_1+L_2+L_3, (d,d,d)\right)$$
parametrizing reduced and irreducible plane curves of geometric genus $1$ and degree $d$ having contact order $d$ at three {\em unassigned} points in the smooth locus of $L_1+L_2+L_3$. More strongly, since all triples of lines with no common points are projectively equivalent, the image of any elliptic curve under any birational $g^2_d$ totally ramified at three points is represented by a point in $V_{d,1}\left(L_1+L_2+L_3, (d,d,d)\right)$. Conversely, the normalization map of any member of $V_{d,1}\left(L_1+L_2+L_3, (d,d,d)\right)$ defines a $g^2_d$ on an elliptic curve totally ramified at three points.

We next note that an elliptic curve $E$ has a finite number of $g^2_d$ totally ramified at three points up to automorphisms; indeed, the relation $dP_1\sim dP_2\sim dP_3$ on $E$ yields that the line bundles $\mathcal O(P_i-P_j)$ are $d$-torsion elements of $\Pic^0(E)\simeq E$.   To prove the desired statement that a birational $g^2_d$  on a general elliptic curve $E$ totally ramified at three points admits no cusps, it is therefore enough to show that a general element $X$ in any irreducible component $V$ of $V_{d,1}\left(L_1+L_2+L_3, (d,d,d)\right)$ is {\em immersed} (that is, the differential of its normalization map is everywhere injective). 

Generalized Severi varieties were introduced by Caporaso-Harris in \cite{CH} (cf. \cite{Za2} for recent results on the topic). Our situation is slightly different since we fix the ra\-mi\-fication profile at three lines instead of one; however, the local computations in \cite[\S 2.2]{CH} are proved for fixed contact order with any smooth curve and thus apply also in our case where the points of contact lie in the smooth locus of $L_1+L_2+L_3$.

We recall the main deformation theoretic arguments in \cite{CH}, adapting them to\linebreak our setting\footnote{In the notation of Caporaso-Harris the line $L$ is here replaced by $L_1+L_2+L_3$ and we have $\alpha=0$, that is, we are not imposing contact order at  any fixed points of $L_1+L_2+L_3$, and $$\beta=(\underbrace{0,\ldots, 0}_\text{$d-1$},d),$$ that is, we are imposing contact order $d$ at three unassigned points of $L_1+L_2+L_3$.}. Let $X$ be a general element of any irreducible component $V$ of \linebreak\mbox{$V_{d,1}\left(L_1+L_2+L_3, (d,d,d)\right)$} and let $f:E\to X\subset \mathbb P^2$ denote the normalization map. Then $f^*(L_1+L_2+L_3)=dP_1+dP_2+dP_3$ for some points $P_1,P_2,P_3\in E$. 
 We consider the normal sheaf $N_f$, its torsion  subsheaf $H_f$ supported at the vanishing divisor $Z$ of the differential $df$ of $f$, and the quotient $\overline{N_f}:=N_f/H_f$. We have the following commutative diagram (cf. \cite[(3.51)]{Se}):
\begin{equation}\label{normal}
\xymatrix{
&&&0\ar[d]&\\
&&&H_f\ar[d]&\\
0\ar[r] &T_E\simeq\mathcal O_E\ar[d]\ar[r]^{df}& f^*T_{\mathbb P^2}\ar@{=}[d]\ar[r]& N_f\ar[d]\ar[r]& 0.\\
0\ar[r] &T_E(Z)\simeq\mathcal O_E(Z)\ar[r]& f^*T_{\mathbb P^2}\ar[r]& \overline{N_f}\ar[d]\ar[r]& 0\\
&&&0&
}
\end{equation}
As in \cite[p. 363]{CH}, for $1\leq i\leq 3$ let $l_i$ be the order of vanishing of the differential  of $f$ at $P_i$ and define the two following divisors on $E$:
\begin{align*}
D:=&\sum_{i=1}^3(d-1)P_i,\\
D_0:=&\sum_{i=1}^3l_iP_i;
\end{align*}
note that the difference $D_1:=D-D_0$ is effective in our case\footnote{and therefore coincides with the divisor $D_1$ defined in \cite{CH}.}. Furthermore, the divisor $Z-D_0$ is effective by the definition of $Z$ and $l_i$.

By \cite[Lemma 2.3 and Lemma 2.6]{CH} (cf. also \cite[p.26]{AC}), the tangent space $T_{[X]}V$ of $V$ at the point $[X]$ injects in $H^0 (E, \overline N_f(-D_1))$. From \eqref{normal} we get:
\begin{equation}\label{deg}
\mathrm{deg}\,\overline N_f(-D_1)=3d-\mathrm{deg}\,Z-\mathrm{deg}\,D+\mathrm{deg}\,D_0=3-\mathrm{deg}(Z-D_0)\leq 3,
\end{equation}
and thus
\begin{equation}\label{dim}
\dim V\leq \dim\,T_{[X]}(V)\leq h^0(E, \overline N_f(-D_1)))\leq 3.
\end{equation}
On the other hand, $3$ equals the expected dimension of $V$ because $3=3d-3(d-1)$, where $3d$ is the dimension of the Severi variety of degree $d$ genus $1$ plane curves and $3(d-1)$ comes from the ramification imposed at three unassigned points (cf. \cite[\S 2.1]{CH}). Hence, $\dim V=3$ and all inequalities in \eqref{deg} and \eqref{dim} are equalities. In par\-ti\-cular, $V$ is smooth at $[X]$ and $T_{[X]}V$ can be identified with $H^0(E,\overline N_f(-D_1))$.  Having degree $3$, the line bundle $\overline N_f(-D_1)$ is very ample and thus possesses a section vanishing at any point of $E$ with order exactly $1$; as in \cite[Proof of Prop. 2.2 p. 364]{CH}, this implies that $X$ is immersed and concludes the proof.
\end{proof}
\begin{rem}
Proposition \ref{elliptic} is sharp in the following sense. Take three points $P_1,P_2,P_3$ on an elliptic curve $E$ satisfying $2P_1\sim 2P_2\sim 2P_3$ and let $d\geq 4$ be an even integer. Then the map $f$ defined by the linear series $\langle dP_1,dP_2,dP_3\rangle$ factors as follows:
$$
\xymatrix{
E\ar[d]_p\ar[rd]^f\\
\mathbb P^1\ar[r]_{\hspace{-0.5cm}q}&R\subset \mathbb P^2,
}
$$
where $p$ is the double cover branched at $P_1,P_2,P_3$ and at a further point $P_0$, and $q$ is the (unique up to projectivities) map from $\mathbb P^1$ defined by the linear series $\langle \frac{d}{2}x_1,\frac{d}{2}x_2,\frac{d}{2}x_3\rangle$ with $x_i:=p(P_i)$. Pl\"ucker's Formula yields that $q$ has no ramification outside of $x_1,x_2,x_3$. Therefore, one computes that the ramification sequence of the $g^2_d$ on $E$ at $P_i$ is $(0,1,d-2)$ for $1\leq i\leq 3$ and $(0,1,2)$ for $i=0$. In particular, the $g^2_d$ has cusps at all the four points $P_0,P_1,P_2,P_3$.

\end{rem}

\end{document}